\author{Daniel Tan}
\date{}
\title{$ C_n $-cofinite twisted modules for $ C_2 $-cofinite vertex operator algebras}
\DeclareMathOperator{\res}{\text{Res}} %complex residue
\DeclareMathOperator{\End}{\text{End}} %end set
\DeclareMathOperator{\one}{\mathbf{1}} %vacuum
\DeclareMathOperator{\id}{\text{id}} %identity morphism
\newcommand{\iu}{\boldsymbol{i}} %imaginary unit
\DeclareMathOperator{\Span}{\text{Span}} %vector span
\DeclareMathOperator{\wt}{\text{wt}} %\conformal weight
\DeclareMathOperator{\coln}{:} %norm order colon
\newcommand{\w}{\widetilde{w}} %element generating module
\newtheorem{theorem}{Theorem}[section]
\newtheorem{proposition}[theorem]{Proposition}
\newtheorem{lemma}[theorem]{Lemma}
\theoremstyle{definition}
\newtheorem{definition}[theorem]{Definition}
\newtheorem{remark}[theorem]{Remark}
\numberwithin{equation}{section}
\begin{document}
\maketitle

\begin{abstract}
Given a vertex operator algebra $ V $ with a general automorphism $ g $ of $ V $, we introduce a notion of $ C_n $-cofiniteness for weak $ g $-twisted $ V $-modules. When $ V $ is $ C_2 $-cofinite and of CFT type, we show that all finitely-generated weak $ g $-twisted $ V $-modules are $ C_n $-cofinite for all $ n \in \mathbb{Z}_{>0} $.
\end{abstract}

\setcounter{section}{-1}

\section{Introduction}
A vertex operator algebra $ V $ is said to be \textit{$ C_2 $-cofinite} if its subspace 
\begin{equation*}
C_2(V) = \{ \res_x x^{-2} Y(u,x)v : u, v \in V \}
\end{equation*}
has finite codimension in $ V $. The notion of $ C_2 $-cofiniteness was originally introduced by Zhu \cite{ZhuModInv} as a technical assumption (among other conditions) to prove the convergence and modular invariance of traces of products of vertex operators acting on irreducible $ V $-modules. This result was later strengthened in \cite{MiyamototC2} to remove certain assumptions, and extended in \cite{DLM_Modular} to certain $ g $-twisted modules; in both works, $ C_2 $-cofiniteness remained a key assumption. 

In \cite{Lifinite}, $ C_2 $-cofiniteness was generalized to the notion of \textit{$ C_n $-cofiniteness} for vertex operator algebra modules. Certain spanning sets for vertex operator algebras were obtained in \cite{GabNeit}, and used to show that a vertex operator algebra is $ C_n $-cofinite for all $ n \in \mathbb{Z}_{\geq 2} $, if it is $ C_2 $-cofinite. Buhl \cite{Buhl} extended this result to $ V $-modules (related results were also proved in \cite{AN} in the presence of an extra condition called ``$ B_1 $-finiteness"). More precisely, Buhl proved that the $ C_2 $-cofiniteness of a \textit{CFT-type} (or \textit{positive-energy}) vertex operator algebra $ V $ is sufficient to derive certain spanning sets for finitely-generated $ V $-modules, hence proving their $ C_n $-cofiniteness for all $ n \in \mathbb{Z}_{\geq 2} $. These results were used in \cite{ABD} to prove a certain relationship between $ C_2 $-cofiniteness, rationality and regularity of $ V $ (the latter two being certain niceness properties for the representation theory of $ V $). 

Buhl's results imply that the finitely-generated modules for a $ C_2 $-cofinite CFT-type vertex operator algebra are $ C_2 $-cofinite, and hence also $ C_1 $-cofinite. These properties have deep implications for the representation theory of vertex operator algebras. Huang \cite{HuangDEs} proved that genus-zero correlation functions defined by products of intertwining operators among $ C_1 $-cofinite modules converge as solutions to differential equations with regular singularities.  This, in turn, provides examples of vertex operator algebras with a natural braided tensor category structure on their category of modules \cite{HL1,HLZ1}. Furthermore, the $ C_2 $-cofiniteness of modules is an important assumption in proving the convergence and modular invariance of genus-one correlation functions \cite{HuangModularDEs}, the Verlinde conjecture \cite{Huang_VOAsVerlinde},  and the rigidity and modularity in the braided tensor categories of $ V $-modules \cite{HuangRigid}. Since analogs of these results are conjectured \cite{Reptheoryandorbifoldcft} in the study of orbifold conformal field theory, extending Buhl’s result to the twisted setting is a natural and important next step. \\

Orbifold conformal field theories are conformal field theories constructed in a certain way from an existing conformal field theory and a group of its automorphisms. Frenkel, Lepowsky and Meurman's Moonshine Module \cite{FLM84, FLM} was the first example of such a construction. Motivated in part by the Moonshine Module construction, physicists Dixon, Harvey, Vafa and Witten in \cite{DHVW1, DHVW2} proposed, on a physical level of rigor, a procedure for constructing orbifold conformal field theories from ``twisted sectors" (i.e., twisted modules). While there are additional ingredients in this construction, such as twisted intertwining operators, we focus on the twisted modules in this paper (see \cite{Reptheoryandorbifoldcft} for an overview of the mathematical approach using the representation theory of vertex operator algebras).

Given an automorphism $ g $ of a vertex operator algebra $ V $, a \textit{weak $ g $-twisted $ V $-module} is a vector space equipped with a ``twisted vertex operator" action of $ V $, similar to that of a $ V $-module action, but twisted by $ g $ as the vertex operator circles the origin in the complex plane. This \textit{equivariance property} $ Y(u,x) = e^{2 \pi \iu x \frac{d}{dx}} Y(gu,x) $ is the fundamental property distinguishing weak $ g $-twisted $ V $-modules from weak untwisted $ V $-modules. We say \emph{weak} to indicate that no grading restrictions are imposed.

Twisted modules for finite-order automorphisms were first introduced in \cite{FFR, DongTwisted} by axiomatizing the structures appearing in \cite{LepowskyWilson, FLM84, FLM_E8, FLM_VOCalc, Lepowsky_VOandMonster, FLM} and Lepowsky's calculus of twisted vertex operators in \cite{LepowskyCalculus}. In \cite{Gentwistmod}, twisted modules for general automorphisms were introduced with an axiom known as \textit{duality}, serving as an analytic replacement for the algebraic \textit{Jacobi identity}.  In \cite{bakalovtwist}, a completely algebraic definition of a twisted module (for a vertex algebra) for a general automorphism was introduced. Bakalov's definition uses the notion of \textit{local fields} and an \textit{$ n $-th product} (inspired by \cite{LiLocalSystems,LepowskyLi}), which he showed is equivalent to a \textit{Borcherds identity} when the automorphism is locally finite. 

Jacobi identities for vertex operator algebras and their (twisted) modules are algebraic relations discovered in \cite{FLM}, of shape similar to the Jacobi identity for Lie algebras, featuring (twisted) vertex operators and certain formal $ \delta $-functions of three formal variables. In \cite{Huang2016AssociativeAF}, the duality axiom in \cite{Gentwistmod} was shown to produce a Jacobi identity equivalent to Bakalov's Borcherds identity. This Jacobi identity is analogous to the original ``twisted Jacobi identity" discovered in \cite{FLM}. We choose to use a Jacobi identity in our notion of a weak $ g $-twisted $ V $-module in Definition \ref{def:weak g-twisted module} so it will be immediately clear that our notion relaxes the grading conditions of a $ g $-twisted $ V $-module in Definition 3.1 of \cite{Gentwistmod}. Furthermore, our notion is clearly a generalization of a weak $ g $-twisted $ V $-module in Definition 3.1 of \cite{DLMtwisted}, permitting $ g $ to be a general automorphism of $ V $.  \\

Our work generalizes the results of \cite{Buhl} to weak $ g $-twisted $ V $-modules for a general automorphism $ g $ of $ V $. We introduce a notion of $ C_n $-cofiniteness for weak $ g $-twisted $ V $-modules. We do not impose any constraints (such as finite order or semisimplicity) on the automorphism $ g $. When $ V $ is $ C_2 $-cofinite, there is a finite set $ B \subseteq V $ such that $ V = \Span(B \cup \{ \one\}) + C_2(V) $. When $ V $ is also of CFT type, we show that each finitely-generated weak $ g $-twisted $ V $-module $ W $ has a spanning set given by a list of modes of elements in $ B $ acting on the generators for $ W $ in a non-decreasing order of degree.  The modes can repeat a sufficiently high degree only finitely many times. This so-called ``PBW-type spanning set" originates from ideas in \cite{GabNeit}, \cite{Buhl} and \cite{ABD}. Our spanning set shows that every finitely-generated weak $ g $-twisted $ V $-module $ W $ is $ C_n $-cofinite for all $ n \in \mathbb{Z}_{> 0} $. The method used to prove these propositions is adapted from the method in \cite{Buhl}.  

The results in this paper demonstrate that the notion of $ C_n $-cofiniteness naturally extends to twisted modules and remains implied by the $ C_2 $-cofiniteness of a CFT-type vertex operator algebra. We hope that the assumption of $ C_n $-cofiniteness may be applied in proving the convergence of products of twisted intertwining operators \cite{HUANG2018346} among modules twisted by elements in a finite group, analogous to the untwisted theory \cite{HuangDEs}. Such convergence plays an essential role in constructing the associator natural isomorphism \cite{Reptheoryandorbifoldcft} in the $ G $-crossed tensor category of twisted modules (see also \cite{TuraevCrossed,KirillovGEq}). \\

This paper is organized as follows. In Section \ref{sec:twisted modules}, we introduce a notion of a weak $ g $-twisted $ V $-module for a vertex operator algebra $ V $ together with a vertex operator algebra automorphism $ g $ of $ V $. Our notion of a weak $ g $-twisted $ V $-module will be essentially equivalent to that in \cite{bakalovtwist}. We will derive results (some similar to those in \cite{bakalovtwist} and \cite{Huang2016AssociativeAF}) that will be useful for proving the main results. In Section \ref{sec: cofiniteness}, we introduce a notion of $ C_n $-cofiniteness for weak $ g $-twisted $ V $-modules. Given a $ C_2 $-cofinite CFT-type vertex operator algebra $ V $ and a weak $ g $-twisted $ V $-module generated by a single element, we derive certain spanning sets for the module in Proposition \ref{prop:C2 spanning set}. Using these spanning sets, we prove Theorem \ref{thm:Cn cofiniteness}, namely that every finitely-generated weak $ g $-twisted $ V $-module is $ C_n $-cofinite for all $ n \in \mathbb{Z}_{> 0} $.
\paragraph{Notation} We write $ u(n,k) $ for the modes in $ Y(u,x) = \sum_{k = 0}^K \sum_{n \in \mathbb{C}} u(n,k) x^{-n-1} ( \log x )^k $ for clarity. The modes are also commonly written as $ u_{n,k} $, or other variations, in other literature.
\paragraph{Acknowledgments} I would like to thank Professor Yi-Zhi Huang and Professor James Lepowsky for their helpful discussions, guidance, and suggestions.

\section{Weak $ g $-twisted $ V $-modules} \label{sec:twisted modules}
We use the formal calculus conventions and the notion of a vertex operator algebra $ V = (V,Y, \one, \omega) $ from \cite{LepowskyLi} (see also \cite{FLM, FHL}), and the logarithmic formal calculus introduced in \cite{HLZ2}. The base field in this paper will always be the complex numbers $ \mathbb{C} $. Recall that an automorphism of a vertex operator algebra $ V $ is an invertible linear map $ g : V \to V $ such that $ g\!\one = \one $, $ g \omega = \omega $, and $ gY(u,x)v = Y(gu,x)gv $ for all $ u,v \in V $.

Let $ V $ be a vertex operator algebra and let $ g $ be an automorphism of $ V $. Since $ g $ fixes $ \omega $, it commutes with the modes of $ Y(\omega,x) $. Since $ g $ commutes with $ L(0) $, $ g $ acts invertibly on each finite-dimensional space $ V_{(n)} $. Hence, there is an operator $ \mathcal{L} $ on $ V_{(n)} $ such that $ g|_{V_{(n)}} = e^{2 \pi \iu \mathcal{L}} $. The Jordan decomposition of $ \mathcal{L}  $ gives commuting operators $ \mathcal{S} $ and $ \mathcal{N} $ on $ V_{(n)} $ such that $ \mathcal{L} = \mathcal{S} + \mathcal{N} $, $ \mathcal{S} $  is semisimple and $ \mathcal{N} $ is nilpotent. Define (using the same notation) the operators $ \mathcal{L} $, $ \mathcal{S} $ and $ \mathcal{N} $ on $ V = \coprod_{n \in \mathbb{Z}} V_{(n)} $ to be the direct sums of the respective operators on each of the $ V_{(n)} $. We will choose $ \mathcal{L} $ so that the eigenvalues of $ \mathcal{S} $ have real parts in $ [0,1) $. Denote the set of $ \mathcal{S} $-eigenvalues by $ P(V) $. The vertex operator algebra has decomposition
\begin{equation}\label{eq:eigenspace decomp}
V = \coprod_{n \in \mathbb{Z}, \ \alpha \in P(V)} V^{[\alpha]}_{(n)} , \quad  \text{where } V^{[\alpha]}_{(n)} = \{ v \in V : L(0) v = n v, \ \mathcal{S} v = \alpha v \} .
\end{equation}
We also have the coarser decomposition $ V = \coprod_{ \alpha \in P(V)} V^{[\alpha]} $ given by $ V^{[\alpha]} = \coprod_{n \in \mathbb{Z}} V^{[\alpha]}_{(n)} $. We will use $ V_+ = \coprod_{n \in \mathbb{Z}_{>0}} V_{(n)} $ and $ V_+^{[\alpha]} = \coprod_{n \in \mathbb{Z}_{>0}} V^{[\alpha]}_{(n)} $ to denote the respective subspaces spanned by vectors of positive weight only. 

We will provide a definition for a weak $ g $-twisted $ V $-module. This definition will be equivalent to the notion of a $ g^{-1} $-twisted $ V $-module in Definition 4.1 of \cite{bakalovtwist} when $ g $ is locally finite (this is true when $ g $ is an automorphism of $ V $ viewed as a vertex \emph{operator} algebra). In our presentation, it will be clear that a weak $ g $-twisted $ V $-module relaxes the definition of a $ g $-twisted $ V $-module in Definition 3.1 of \cite{Gentwistmod} by removing the grading conditions. Equivalently, we are generalizing the definition of a weak $ g $-twisted $ V $-module in Definition 3.1 of \cite{DLMtwisted} so that $ g $ can be a general automorphism of $ V $.

The definition below will be algebraic so we can perform formal calculus with the Jacobi identity immediately. The algebraic formulation of the equivariance property will use the operator $ e^{2 \pi \iu x \frac{d}{dx}} $ on $ (\End W)\{x\}[\log x] $ that is defined by
\begin{equation}
e^{2 \pi \iu x \frac{d}{dx}} \sum_{k = 0}^K \sum_{n \in \mathbb{C}} a(n,k) x^{n} ( \log x )^k = \sum_{k = 0}^K \sum_{n \in \mathbb{C}} a(n,k) e^{2 \pi \iu n} x^{n} ( \log x + 2\pi \iu )^k .
\end{equation}
This is the formal version of taking the variable $ x $ around the origin once.

\begin{definition} \label{def:weak g-twisted module}
A \emph{weak $ g $-twisted $ V $-module} is a vector space $ W $ equipped with a linear map
\begin{equation}
\begin{aligned}
Y_W : V &\to (\End W )\{x\} [ \log x ] \\
u &\mapsto Y_W(u, x) = \sum_{k = 0}^K \sum_{n \in \mathbb{C}} u(n,k) x^{-n-1} ( \log x )^k 
\end{aligned}
\end{equation}
satisfying the following conditions:
\begin{enumerate}
\item[1.] (\emph{The lower truncation property}) For all $ u \in V $, $ w \in W $, $ n \in \mathbb{C} $ and $ k \in \mathbb{Z}_{\geq 0} $, we have 
\begin{equation}
u(n + l,k) w = 0 , \quad \text{for all sufficiently large }  l \in \mathbb{Z} .
\end{equation}  
\item[2.] (\emph{The identity property}) $ Y_W(\one, x) = \id_W $. 
\item[3.] (\emph{The Jacobi identity}) For all $ u, v \in V $,
\begin{equation} \label{eq:Jacobi}
\begin{aligned}
&x_0^{-1} \delta \left( \frac{ x_1 - x_2}{x_0} \right)Y_W(u,x_1) Y_W(v,x_2) - x_0^{-1} \delta \left( \frac{ -x_2 + x_1}{x_0} \right)Y_W(v,x_2) Y_W(u,x_1) \\
& \qquad = x_1^{-1} \delta \left( \frac{ x_2 + x_0}{x_1} \right) Y_W \left( Y \left(   \left( \frac{x_2 + x_0}{x_1} \right)^{\mathcal{L}} u, x_0 \right) v,x_2 \right).
\end{aligned} 
\end{equation} 
\item[4.] (\emph{The equivariance property}) For all $ u \in V $,
\begin{equation}
e^{2 \pi \iu x \frac{d}{dx}} Y_W(gu,x) =  Y_W(u,x).
\end{equation}
\end{enumerate}
\end{definition}

The Jacobi identity \eqref{eq:Jacobi} is equivalent to the Borcherds identity presented in Propositions 5.2 and 5.3 of \cite{bakalovtwist}. The $ g $-twisted $ V $-modules and their variations in \cite{FLM}, \cite{DongTwisted}, \cite{DL_relative_twisted}, \cite{DLMtwisted}, \cite{Gentwistmod}, \cite{bakalovtwist} (when applied to vertex operator algebras), and \cite{Huang2016AssociativeAF} are examples of weak $ g $-twisted $ V $-modules. In \cite{Huang2016AssociativeAF}, the Jacobi identity \eqref{eq:Jacobi} was derived from the analytic ``duality property" used in \cite{Gentwistmod} and \cite{Huang2016AssociativeAF}. Note that our definition does not assume the $ L(-1) $-derivative property  
\begin{equation}
\frac{d}{dx} Y_W(u,x) = Y_W(L(-1)u,x), \quad \text{for all } u \in V ,
\end{equation}
where 
\begin{equation}
\frac{d}{dx} = \frac{\partial}{\partial x} + x^{-1} \frac{\partial}{\partial (\log x)}.
\end{equation}
We will prove the $ L(-1) $-derivative property as a consequence later in Proposition \ref{prop:L(-1)}.

We now show that $ \mathcal{N} $ can be used to remove logarithms from $ Y_W $ as proved in \cite{Huang2016AssociativeAF} for $ \overline{\mathbb{C}}_+ $-graded weak $ g $-twisted $ V $-modules. We briefly reprove the analogous results here to show that they hold for our relaxed notion of a weak $ g $-twisted $ V $-module without a $ g $-action on $ W $, any grading on $ W $, nor the $ L(-1) $-derivative property.

Define the operators $ e^{\pm 2 \pi \iu x \frac{\partial}{\partial x}} $ and $ e^{\pm 2 \pi \iu  \frac{\partial}{\partial (\log x)}} $ on $(\End W)\{x\}[\log x] $ by
\begin{gather*}
e^{\pm 2 \pi \iu x \frac{\partial}{\partial x}} \sum_{k = 0}^K \sum_{n \in \mathbb{C}} a(n,k) x^{n} (\log x)^k = \sum_{k = 0}^K \sum_{n \in \mathbb{C}} a(n,k) e^{\pm 2 \pi \iu n} x^{n} (\log x)^k \quad \text{and} \\
\quad e^{\pm 2 \pi \iu  \frac{\partial}{\partial (\log x)}} \sum_{k = 0}^K \sum_{n \in \mathbb{C}} a(n,k) x^{n} (\log x)^k = \sum_{k = 0}^K \sum_{n \in \mathbb{C}} a(n,k) x^{n} ( \log x \pm 2\pi \iu )^k .
\end{gather*}
Observe that $ e^{\pm 2 \pi \iu  \frac{\partial}{\partial (\log x)}} $ acts as the finite sum $ \sum_{j \geq 0} \frac{1 }{j!} \left(\pm 2 \pi \iu \frac{\partial}{\partial (\log x)} \right)^j $ when acting on polynomials in $ \log x $. However, since formal calculus does not involve any analytic convergence of sums, we will avoid identifying $ e^{\pm 2 \pi \iu x \frac{\partial}{\partial x}} $ with the infinite sum $ \sum_{j \geq 0} \frac{1}{j!} \left( \pm 2 \pi \iu x \frac{\partial}{\partial x} \right)^j $.

Let $ u \in V^{[\alpha]} $. Then, for sufficiently large $ N \in \mathbb{Z}_{>0} $, we have
\begin{align*}
0 &=  Y_W \left( (g - e^{2 \pi \iu \alpha})^N u,x \right)  =  \left( e^{-2 \pi \iu x \frac{\partial}{\partial x}} e^{-2 \pi \iu  \frac{\partial}{\partial (\log x)}}    - e^{2 \pi \iu \alpha} \right)^N  Y_W(u,x) \\
&= \sum_{k = 0}^K \sum_{n \in \mathbb{C}} u(n,k) \left(e^{-2 \pi \iu x \frac{\partial}{\partial x}} e^{-2 \pi \iu  \frac{\partial}{\partial (\log x)}}   - e^{2 \pi \iu \alpha} \right)^N  x^{-n-1} (\log x)^k \\
&= \sum_{k = 0}^K \sum_{n \in \mathbb{C}} u(n,k) e^{2 \pi \iu (n+1)N} \left( e^{-2 \pi \iu  \frac{\partial}{\partial (\log x)}}   - e^{2 \pi \iu (\alpha-n-1)} \right)^N  x^{-n-1} (\log x)^k .
\end{align*}
For each $ n \in \mathbb{C} $, the coefficient of $ x^{-n-1} $ gives 
\begin{align*}
0 &= \left( e^{-2 \pi \iu  \frac{\partial}{\partial (\log x)}}   - e^{2 \pi \iu (\alpha-n-1)} \right)^N \sum_{k = 0}^K u(n,k)    (\log x)^k .
\end{align*}
However, the only possible eigenvector of the translation $ e^{-2 \pi \iu  \frac{\partial}{\partial (\log x)}} $ is a constant with corresponding eigenvalue $ 1 $. So, $ \sum_{k = 0}^K u(n,k) (\log x)^k = 0 $ whenever $ \alpha - n - 1 \notin \mathbb{Z} $. Hence, for all $ u \in V^{[\alpha]} $, we have
\begin{equation} \label{eq:sum index}
Y_W(u, x) = \sum_{k = 0}^K \sum_{n \in \alpha + \mathbb{Z}} u(n,k) x^{-n-1} ( \log x )^k ,
\end{equation} 
implying
\begin{equation} \label{eq:partial relation 1}
 e^{-2 \pi \iu x \frac{\partial}{\partial x}} Y_W(u, x) =  e^{-2 \pi \iu \alpha} Y_W(u, x) = Y_W(e^{2 \pi \iu \mathcal{S}} u, x).
\end{equation} 
Hence,
\begin{equation} \label{eq:partial relation 2}
\begin{aligned}
e^{-2\pi \iu \frac{\partial}{ \partial (\log x)}} Y_W(u, x) &=  e^{2 \pi \iu x \frac{\partial}{\partial x}} e^{-2 \pi \iu x \frac{\partial}{\partial x}} e^{-2\pi \iu \frac{\partial}{\partial (\log x)} } Y_W(u, x)  \\
&=   Y_W( e^{-2 \pi \iu \mathcal{S}}g u, x) = Y_W( e^{2 \pi \iu \mathcal{N}}u, x). 
\end{aligned}
\end{equation} 
The space $ V $ is spanned by generalized eigenvectors of $ g $, hence \eqref{eq:partial relation 1} and \eqref{eq:partial relation 2} are also true for general $ u \in V $. 

Recall that the algebraic logarithm power series $ \log(1 - x) = \sum_{k \geq 1} \frac{-1}{k} x^k $ provides an ``inverse" to the power series $ e^x $ in the sense that $ \log(1 - (1-e^x)) = x $.  Since $ \frac{\partial}{\partial (\log x)} $ and  $ \mathcal{N} $ are both locally nilpotent operators on their respective spaces, we find that  
\begin{align*}
 \log(1 - (1-e^{2 \pi \iu \mathcal{N}})) u &= 2 \pi \iu \mathcal{N} u  \ \ \text{ and } \ \ \\
 \log(1 - (1-e^{-2\pi \iu \frac{\partial}{ \partial (\log x)}})) Y_W(u, x) &= -2\pi \iu \frac{\partial}{ \partial (\log x)} Y_W(u, x),
\end{align*}
where $ e^x $ and $ \log(1 - x) $ both denote some finite truncation of their infinite sum formulas.

Note that this works since the operators are locally nilpotent; otherwise, we would have to use a multi-valued analytic logarithm. 

\begin{proposition}
For all $ u \in V $, we have
\begin{gather} 
 - \frac{\partial}{\partial (\log x)} Y_W(u, x)  = Y_W( \mathcal{N} u, x) \label{eq:log derivative}, \\
Y_W( x^{\mathcal{N}} u, x) =  (Y_W)_0(  u, x) \label{eq: log remove},
\end{gather} 
where we use $ (Y_W)_0(u,x) = \sum_{n \in \mathbb{C}} u(n,0) x^{-n-1} $ to denote the series of log-free terms in $ Y_W $. 
\end{proposition}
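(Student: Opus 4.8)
The plan is to derive both identities from the intertwining relation \eqref{eq:partial relation 2} together with the two logarithm observations recorded just above, treating everything as formal-calculus manipulations. For \eqref{eq:log derivative}, I would first note that iterating \eqref{eq:partial relation 2} gives $\left(e^{-2\pi\iu\frac{\partial}{\partial(\log x)}}\right)^m Y_W(u,x) = Y_W\left(\left(e^{2\pi\iu\mathcal{N}}\right)^m u, x\right)$ for every $m \in \mathbb{Z}_{\geq 0}$, so by linearity of $Y_W$ the operator $e^{-2\pi\iu\frac{\partial}{\partial(\log x)}}$ on the $W$-side is intertwined through $Y_W$ with $e^{2\pi\iu\mathcal{N}}$ on the $V$-side for any polynomial expression in these operators. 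Applying the truncated power series $\log(1-(1-\,\cdot\,))$—which is a genuine polynomial here because both $\frac{\partial}{\partial(\log x)}$ and $\mathcal{N}$ are locally nilpotent—and invoking the two displayed logarithm identities, the left-hand side becomes $-2\pi\iu\frac{\partial}{\partial(\log x)}Y_W(u,x)$ while the right-hand side becomes $Y_W(2\pi\iu\mathcal{N}u, x)$. Dividing by $2\pi\iu$ yields \eqref{eq:log derivative}.

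For \eqref{eq: log remove}, I would expand $x^{\mathcal{N}} = e^{\mathcal{N}\log x} = \sum_{j\geq 0}\frac{(\log x)^j}{j!}\mathcal{N}^j$, a finite sum by local nilpotence of $\mathcal{N}$, and pull it through $Y_W$ by linearity to obtain $Y_W(x^{\mathcal{N}}u,x) = \sum_{j\geq 0}\frac{(\log x)^j}{j!}Y_W(\mathcal{N}^j u,x)$. Iterating \eqref{eq:log derivative} rewrites $Y_W(\mathcal{N}^j u, x) = \left(-\frac{\partial}{\partial(\log x)}\right)^j Y_W(u,x)$, so the sum becomes the operator $\sum_{j\geq 0}\frac{(\log x)^j}{j!}\left(-\frac{\partial}{\partial(\log x)}\right)^j$ applied to $Y_W(u,x)$. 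Writing $t = \log x$, for each fixed power $x^{-n-1}$ the coefficient $\sum_k u(n,k)t^k$ is a polynomial in $t$, and the operator acts as the finite Taylor shift $P(t)\mapsto \sum_{j}\frac{(-t)^j}{j!}P^{(j)}(t) = P(0)$, extracting the constant term $u(n,0)$. Summing over $n$ gives exactly $(Y_W)_0(u,x)$.

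The main obstacle I anticipate is justifying the Taylor-shift evaluation in the second part: the variable $t = \log x$ appears both as the coefficient $\frac{t^j}{j!}$ and as the object being differentiated, so the identity $\sum_j\frac{(-t)^j}{j!}P^{(j)}(t) = P(0)$ must be verified as a formal polynomial identity (for instance by checking it on monomials $P(t) = t^m$ and using the binomial theorem) rather than invoked as an analytic Taylor expansion. Ensuring that all exponential and logarithmic operators are read as finite truncations, so that no convergence is silently assumed, is the other point that will require care.
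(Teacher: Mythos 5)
Your proposal is correct and follows essentially the same route as the paper: identity \eqref{eq:log derivative} is obtained by intertwining polynomials in $e^{-2\pi\iu\frac{\partial}{\partial(\log x)}}$ with polynomials in $e^{2\pi\iu\mathcal{N}}$ through $Y_W$ and then applying the truncated logarithm series, and \eqref{eq: log remove} is obtained by formally exponentiating \eqref{eq:log derivative} and recognizing the finite Taylor-shift operator $\sum_{j\geq 0}\frac{(-\log x)^j}{j!}\bigl(\frac{\partial}{\partial(\log x)}\bigr)^j$ as evaluation at $\log x = 0$. Your planned verification of the shift identity on monomials is a sound way to make the last step a genuine formal-polynomial identity, which the paper records as $Y_W(u,x)|_{\log x = 0}$ without further comment.
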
 

\begin{proof}
From \eqref{eq:partial relation 2}, we can show that
\begin{align*}
\left(1 - e^{-2\pi \iu \frac{\partial}{ \partial (\log x)}} \right)^k Y_W(u, x) = Y_W\left( \left( 1 - e^{2 \pi \iu \mathcal{N}}\right)^k u, x \right).
\end{align*}
Hence, 
\begin{align*}
 - 2 \pi \iu \frac{\partial}{\partial (\log x)} Y_W(u, x) &= \log(1 - (1-e^{-2\pi \iu \frac{\partial}{ \partial (\log x)}})) Y_W(u, x) \\
 &= Y_W\left( \log(1 - (1-e^{2 \pi \iu \mathcal{N}}))u,x \right) = Y_W(2 \pi \iu \mathcal{N} u ,x),
\end{align*}
and we have \eqref{eq:log derivative}. Then formally exponentiating \eqref{eq:log derivative} gives
\begin{align*}
 Y_W( x^{\mathcal{N}} u, x) &= Y_W( e^{\log x \mathcal{N}} u, x) = \sum_{j \geq 0} \frac{1}{j!} (\log x)^j Y_W( \mathcal{N}^j u, x) \\
 &= \sum_{j \geq 0} \frac{(-1)^j}{j!} (\log x)^j \left(\frac{\partial}{\partial (\log x)}\right)^j Y_W( u, x) = Y_W( u, x)|_{\log x = 0} =  \sum_{n \in \mathbb{C}} u(n,0) x^{-n-1} . \qedhere
\end{align*} 
\end{proof}

We have just seen how $ \mathcal{N} $ interacts with $ Y_W $. Now we will see how $ \mathcal{N} $ interacts with $ Y $. Observe that we have the Leibniz rule
\begin{align*}
\left( g - e^{2\pi \iu (\alpha+ \beta)} \right) \Big( Y(u,x)v \Big) = \Big( Y \left( (g - e^{2\pi \iu \alpha} )u, x \right) g \Big) v + Y(u,x) \Big( e^{2\pi \iu \alpha} ( g -e^{2\pi \iu \beta}) v \Big).
\end{align*}
Hence,
\begin{equation} \label{eq:gen eigen}
\left( g - e^{2\pi \iu (\alpha+ \beta)} \right)^N \Big( Y(u,x)v \Big) = \sum_{j = 0}^N \binom{N}{j} Y \left( (g - e^{2\pi \iu \alpha} )^{N-j} u, x \right) g^{N-j}  e^{2\pi \iu \alpha j} ( g -e^{2\pi \iu \beta})^j v  .
\end{equation}
If $ u $ and $ v $ are generalized eigenvectors of $ g $ with eigenvalues $ e^{2 \pi \iu \alpha} $ and $  e^{2 \pi \iu \beta} $, respectively, we can choose $ N $ sufficiently large so that the right-hand side of \eqref{eq:gen eigen} is zero. This shows that 
\begin{equation}
e^{2 \pi \iu \mathcal{S}} Y(u,x) e^{-2 \pi \iu \mathcal{S}} = Y( e^{2 \pi \iu \mathcal{S}}u,x).
\end{equation}
Hence,  
\begin{equation}
e^{2 \pi \iu \mathcal{N}} Y(u,x) e^{-2 \pi \iu \mathcal{N}} = Y( e^{2 \pi \iu \mathcal{N}}u,x).
\end{equation}
Define $ L_{\mathcal{N}} $ and $ R_{\mathcal{N}} $ to be operators on $ \End V $ that compose operators on $ V $ with $ \mathcal{N} $ on the left and right, respectively. Since $ L_{\mathcal{N}} $ and $ R_{\mathcal{N}} $ commute, we have 
\begin{align*}
(L_{\mathcal{N}} - R_{\mathcal{N}})^N f v  = \sum_{j =0}^N \binom{N}{j}(-1)^j L_{\mathcal{N}}^{N-j}R_{\mathcal{N}}^j f v = \sum_{j =0}^N \binom{N}{j}(-1)^j \mathcal{N}^{N-j} f \mathcal{N}^j v = 0
\end{align*} 
for sufficiently large $ N $, dependent on $ f \in \End V  $ and $ v \in V $. Since the sums in the exponentials in each coefficient of $ x^{-n-1} $ in 
\begin{align*}
e^{2\pi\iu (L_\mathcal{N} - R_\mathcal{N})} Y(u,x) v = e^{2 \pi \iu \mathcal{N}} Y(u,x) e^{-2 \pi \iu \mathcal{N}} v  =  Y( e^{2 \pi \iu \mathcal{N}} u, x )v,
\end{align*}
are finite, we have
\begin{align*}
2\pi\iu (L_\mathcal{N} - R_\mathcal{N})Y(u,x) v  &= \log(1 - ( 1- e^{2\pi\iu (L_\mathcal{N} - R_\mathcal{N})})) Y(u,x) v \\
&=  Y( \log(1 - (1 - e^{2 \pi \iu \mathcal{N}}))u, x )v = Y( 2 \pi \iu \mathcal{N} u, x )v.
\end{align*}
Thus, we have
\begin{equation}\label{eq:N conj}
[ \mathcal{N}, Y(u,x) ] = Y(\mathcal{N} u,x),
\end{equation}
which can also be interpreted as saying that $ \mathcal{N} $ is a derivation of $ V $. Formally exponentiating \eqref{eq:N conj} gives
\begin{equation} 
\label{eq: x^N conj }
x_0^{\mathcal{N}} Y(u,x)x_0^{-\mathcal{N}} = Y(x_0^{\mathcal{N}} u,x) .
\end{equation}

From the Jacobi identity \eqref{eq:Jacobi}, the ability to remove logarithms \eqref{eq: log remove} and the conjugation property \eqref{eq: x^N conj }, we have
\begin{align*}
&x_0^{-1} \delta \left( \frac{ x_1 - x_2}{x_0} \right)(Y_W)_0(u,x_1) (Y_W)_0(v,x_2) - x_0^{-1} \delta \left( \frac{ -x_2 + x_1}{x_0} \right)(Y_W)_0(v,x_2) (Y_W)_0(u,x_1) \\
& \qquad = x_1^{-1} \delta \left( \frac{ x_2 + x_0}{x_1} \right) (Y_W)_0 \left( Y \left( \left( \frac{x_2}{x_1}\right)^{\mathcal{S}} \left( 1 + \frac{ x_0}{x_2} \right)^{\mathcal{L}} u, x_0 \right) v,x_2 \right).
\end{align*} 
When $ u $ is an eigenvector for $ \mathcal{S} $ with eigenvalue $ \alpha $ (i.e., a generalized eigenvector for $ g $), we have
\begin{equation}\label{eq:jacobi Y_0}
\begin{aligned}
&x_0^{-1} \delta \left( \frac{ x_1 - x_2}{x_0} \right)(Y_W)_0(u,x_1) (Y_W)_0(v,x_2) - x_0^{-1} \delta \left( \frac{ -x_2 + x_1}{x_0} \right)(Y_W)_0(v,x_2) (Y_W)_0(u,x_1) \\
& \qquad = x_1^{-1} \delta \left( \frac{ x_2 + x_0}{x_1} \right) \left( \frac{x_2}{x_1}\right)^{\alpha} (Y_W)_0 \left( Y \left( \left( 1 + \frac{ x_0}{x_2} \right)^{\mathcal{L}} u, x_0 \right) v,x_2 \right)
\end{aligned} 
\end{equation}
(cf. equation (2.29) of \cite{Huang2016AssociativeAF}). This form of the Jacobi identity will be better suited for residue calculus.

We will now prove the $ L(-1) $-derivative property. We apply $ \res_{x_0}\res_{x_1} x_0^{-2} x_1^{\alpha}{x_2}^{-\alpha} $ to the left-hand side of \eqref{eq:jacobi Y_0} after specializing $ v = \one $ to find
\begin{align*}
&\res_{x_0}\res_{x_1}x_0^{-2} x_1^{\alpha}{x_2}^{-\alpha} \left( x_0^{-1} \delta \left( \frac{ x_1 - x_2}{x_0} \right)(Y_W)_0(u,x_1) (Y_W)_0(\one,x_2) \right. \\
& \quad - \left. x_0^{-1} \delta \left( \frac{ -x_2 + x_1}{x_0} \right)(Y_W)_0(\one,x_2) (Y_W)_0(u,x_1) \right) \\
&= \res_{x_1} x_1^{\alpha}{x_2}^{-\alpha} \left( ( x_1 - x_2)^{-2} - ( -x_2 + x_1)^{-2} \right) (Y_W)_0(u,x_1) \\
&= \res_{x_1} x_1^{\alpha}{x_2}^{-\alpha} \frac{\partial}{\partial x_1}  \left( - x_1^{-1} \delta \left( \frac{ x_2}{x_1} \right)\right)  (Y_W)_0(u,x_1) \\
&= \res_{x_1} {x_2}^{-\alpha}  x_1^{-1} \delta \left( \frac{ x_2}{x_1} \right)  \frac{\partial}{\partial x_1} \left( x_1^{\alpha} (Y_W)_0(u,x_1) \right) \\
&= \res_{x_1} {x_2}^{-\alpha}  x_1^{-1} \delta \left( \frac{ x_2}{x_1} \right)  \frac{\partial}{\partial x_2} \left( {x_2}^{\alpha} (Y_W)_0(u,x_2) \right) \\
&= {x_2}^{-\alpha}  \frac{\partial}{\partial x_2} \left( {x_2}^{\alpha} (Y_W)_0(u,x_2) \right) = \frac{\partial}{\partial x_2} (Y_W)_0(u,x_2) + \alpha x_2^{-1} (Y_W)_0(u,x_2) .
\end{align*}
Doing the same to the right-hand side gives,
\begin{align*}
&\res_{x_0}\res_{x_1}x_0^{-2} x_1^{\alpha}{x_2}^{-\alpha} \left( x_1^{-1} \delta \left( \frac{ x_2 + x_0}{x_1} \right) \left( \frac{x_2}{x_1}\right)^{\alpha} (Y_W)_0 \left( Y \left( \left( 1 + \frac{ x_0}{x_2} \right)^{\mathcal{L}} u, x_0 \right) \one ,x_2 \right) \right) \\
&=\res_{x_0} x_0^{-2} (Y_W)_0 \left( Y \left( \left( 1 + \frac{ x_0}{x_2} \right)^{\mathcal{L}} u, x_0 \right) \one ,x_2 \right)  \\
&=\res_{x_0} x_0^{-2} (Y_W)_0 \left( \sum_{n \in \mathbb{Z}}  \left( \sum_{j \geq 0 } \binom{ \mathcal{L}}{j} x_0^j x_2^{-j}  u \right)(n) x_0^{-n-1} \one,x_2 \right)  \\
&= \sum_{j \geq 0 } x_2^{-j} (Y_W)_0 \left( \left( \binom{ \mathcal{L}}{j}  u \right)(-2 + j) \one, x_2 \right)  =  (Y_W)_0 \left(u(-2) \one, x_2 \right) + x_2^{-1} (Y_W)_0 \left( \left( \mathcal{L} u \right)(-1) \one, x_2 \right) \\
&=(Y_W)_0 \left( L(-1)u, x_2 \right) + x_2^{-1} (Y_W)_0 \left( \alpha u + \mathcal{N} u , x_2 \right) .
\end{align*}
Rearranging gives
\begin{equation}\label{eq:Y_0 derivative}
(Y_W)_0 \left( L(-1)u, x \right) =  \frac{\partial}{\partial x} (Y_W)_0(u,x) - x^{-1} (Y_W)_0 \left( \mathcal{N} u , x \right).
\end{equation}
Since $ \omega $ is fixed by $ g $, we have $ \mathcal{N} \omega = 0 $, hence \eqref{eq:N conj} gives $ [L(-1), \mathcal{N} ] = 0 $. And so, from \eqref{eq:Y_0 derivative}, we obtain
\begin{align*}
(Y_W)_0 \left( e^{-\log x \mathcal{N}} L(-1)u, x \right) &= (Y_W)_0 \left( L(-1) e^{-\log x \mathcal{N}} u, x \right) \\
&=  \frac{\partial}{\partial x} (Y_W)_0(e^{-\log x \mathcal{N}}u,x) - x^{-1} (Y_W)_0 \left( \mathcal{N}e^{-\log x \mathcal{N}} u , x \right) \\
&= \frac{\partial}{\partial x} (Y_W)_0(e^{-\log x \mathcal{N}}u,x) + x^{-1} \frac{\partial}{\partial (\log x )} (Y_W)_0 \left( e^{-\log x \mathcal{N}} u , x \right)  \\
&= \frac{d}{d x} (Y_W)_0(e^{-\log x \mathcal{N}}u,x).
\end{align*}
Thus, by \eqref{eq: log remove}, we obtain the following result (cf. equation (4.3) of \cite{bakalovtwist} and Definition 3.1 of \cite{Gentwistmod}).
\begin{proposition} \label{prop:L(-1)}
Let $ W $ be a weak $ g $-twisted $ V $-module. Then $ W $ satisfies the \emph{$ L(-1) $-derivative property}
\begin{equation}
Y_W( L(-1)u,x) = \frac{d}{d x} Y_W(u,x), \quad \text{for all } u \in V. \qedhere
\end{equation}
\end{proposition}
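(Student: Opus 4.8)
The plan is to obtain the full statement by lifting the log-free computation already assembled above back to $Y_W$. The displayed chain of equalities preceding the statement establishes
\[
(Y_W)_0\!\left(e^{-\log x\,\mathcal{N}} L(-1)u, x\right) = \frac{d}{dx}(Y_W)_0\!\left(e^{-\log x\,\mathcal{N}} u, x\right),
\]
so it suffices to recognize the two sides as $Y_W(L(-1)u,x)$ and $\frac{d}{dx}Y_W(u,x)$ respectively. The only tool needed for this recognition is the log-removal identity \eqref{eq: log remove}.

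First I would invert \eqref{eq: log remove}. Because $\mathcal{N}$ is locally nilpotent, $x^{-\mathcal{N}} = e^{-\log x\,\mathcal{N}}$ is a well-defined operator on $V$ taking values in polynomials in $\log x$, and it is a genuine two-sided inverse of $x^{\mathcal{N}}$. Substituting $x^{-\mathcal{N}} w$ for $u$ in \eqref{eq: log remove} then yields, for every $w \in V$,
\[
Y_W(w, x) = (Y_W)_0\!\left(e^{-\log x\,\mathcal{N}} w, x\right).
\]
Applying this with $w = L(-1)u$ converts the left-hand side of the display into $Y_W(L(-1)u,x)$, and applying it with $w = u$ converts the argument of $\frac{d}{dx}$ on the right-hand side into $Y_W(u,x)$. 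Combining the two gives $Y_W(L(-1)u,x) = \frac{d}{dx}Y_W(u,x)$, as claimed.

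The step deserving care is the total derivative $\frac{d}{dx} = \frac{\partial}{\partial x} + x^{-1}\frac{\partial}{\partial(\log x)}$, since the conjugating factor $e^{-\log x\,\mathcal{N}}$ introduces honest $\log x$ dependence into the argument of $(Y_W)_0$; this is exactly what the $x^{-1}\frac{\partial}{\partial(\log x)}$ term accounts for, absorbing the correction $-x^{-1}(Y_W)_0(\mathcal{N}u,x)$ appearing in \eqref{eq:Y_0 derivative} and reassembling $\frac{\partial}{\partial x}$ and the $\log x$-derivative into $\frac{d}{dx}$. This manipulation uses that $e^{-\log x\,\mathcal{N}}$ commutes with $L(-1)$, i.e.\ $[L(-1),\mathcal{N}]=0$, which in turn follows from $\mathcal{N}\omega = 0$ together with the derivation property \eqref{eq:N conj}. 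Since everything reduces to finite sums in $\log x$ and to the already-established residue computation giving \eqref{eq:Y_0 derivative}, no analytic convergence issue arises; the only real obstacle is the bookkeeping of the logarithmic terms, which the inversion of \eqref{eq: log remove} makes routine.
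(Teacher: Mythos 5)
Your proposal is correct and follows essentially the same route as the paper: starting from \eqref{eq:Y_0 derivative}, using $[L(-1),\mathcal{N}]=0$ to assemble $\frac{d}{dx}=\frac{\partial}{\partial x}+x^{-1}\frac{\partial}{\partial(\log x)}$ on the conjugated series, and then inverting \eqref{eq: log remove} via $Y_W(u,x)=(Y_W)_0(x^{-\mathcal{N}}u,x)$ to identify both sides. The paper's own proof is exactly this chain of displays preceding the proposition, so no further comparison is needed.
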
 

Furthermore, since $ \omega $ is fixed by $ g $, the Jacobi identity \eqref{eq:Jacobi} lifts the Virasoro algebra action on $ V $ to a Virasoro algebra action on $ W $ with the same central charge as $ V $. Any usual relations involving the Virasoro modes can utilize the $ L(-1) $-derivative property. For example,
\begin{align*}
[L(0), Y_W(u,x)] = Y_W(L(0)u,x) + x Y_W(L(-1)u,x) = Y_W(L(0)u,x) + x \frac{d}{dx} Y_W(u,x) .
\end{align*}

\begin{remark}
In Definition \ref{def:weak g-twisted module}, replacing the codomain of $ Y_W $ with $ W[ \log x ]\{x\} $ would give an a priori weaker definition. However, \eqref{eq:log derivative} can be derived in the same manner, which, due to the local nilpotency of $ \mathcal{N} $, shows that there is a global upper bound for the powers of $ \log x $ for all powers of $ x $. Hence, the image of $ Y_W $ is contained in the original smaller codomain $ W\{x\}[ \log x ] $.
\end{remark}

\begin{remark}
The above exposition for the $ L(-1) $-derivative property was chosen to additionally derive the important formulas \eqref{eq:jacobi Y_0}, \eqref{eq:Y_0 derivative} and the lemma below for the next section. We could have also proved the $ L(-1) $-derivative property directly from \eqref{eq:Jacobi} after proving \eqref{eq:sum index} and \eqref{eq:log derivative}. If this approach is taken, one must carefully distinguish between $ \frac{\partial}{\partial x_1}  $ and $ \frac{d}{d x_1} = \frac{\partial}{\partial x_1} + x_1 \frac{\partial}{\partial (\log x_1)} $ when performing $ \res_{x_1} $.
\end{remark}

The log-free variant of the $ L(-1) $-derivative property \eqref{eq:Y_0 derivative} provides a lemma that will be useful in the following section.
\begin{lemma} \label{lem:lower modes}
Let $ u \in V $ and $ n \in \mathbb{C} \backslash \mathbb{Z}_{\geq 0} $. Then for all $ j \in \mathbb{Z}_{>0} $, 
\begin{equation}
u(n-j,0) = \sum_{i \in I} v_i(n,0) ,
\end{equation}
for some elements $ v_i \in V $ with a finite index set $ I $.
\end{lemma}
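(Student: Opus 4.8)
The plan is to read a recursion among the log-free modes directly off the identity \eqref{eq:Y_0 derivative}. Writing $(Y_W)_0(w,x) = \sum_{m \in \mathbb{C}} w(m,0) x^{-m-1}$ for an arbitrary $w \in V$ and comparing the coefficient of each power of $x$ on the two sides of \eqref{eq:Y_0 derivative}, I expect to obtain, for every $q \in \mathbb{C}$, the relation
\begin{equation*}
(q+1)\, w(q,0) = -\,(L(-1)w)(q+1,0) - (\mathcal{N}w)(q,0).
\end{equation*}
The crucial feature is that the first term on the right has its mode index raised by one, whereas the second term keeps the index $q$ but replaces $w$ by $\mathcal{N}w$.

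Next I would eliminate the same-index term using the local nilpotency of $\mathcal{N}$. Assuming $q+1 \neq 0$, I solve for $w(q,0)$ and substitute the analogous expression for $(\mathcal{N}w)(q,0)$, then for $(\mathcal{N}^2 w)(q,0)$, and so on. Since $\mathcal{N}$ is locally nilpotent on $V$, we have $\mathcal{N}^N w = 0$ for some $N$, so the substitution terminates and yields the finite ``single-step'' formula
\begin{equation*}
w(q,0) = \sum_{k = 0}^{N-1} \left( \frac{-1}{q+1} \right)^{k+1} (L(-1)\mathcal{N}^k w)(q+1,0), \qquad q+1 \neq 0,
\end{equation*}
which rewrites $w(q,0)$ as a finite sum of modes at the single higher index $q+1$, of the elements $L(-1)\mathcal{N}^k w \in V$.

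Finally I would iterate this single step by induction on $j$ to raise the index from $n-j$ up to $n$. The single step applied at $q = n-j, n-j+1, \dots, n-1$ divides by the denominators $q+1 \in \{\, n-j+1, \dots, n \,\}$, so it is legitimate precisely when none of these vanishes, i.e.\ when $n \notin \{0,1,\dots,j-1\}$ --- which is exactly guaranteed by the hypothesis $n \in \mathbb{C}\setminus\mathbb{Z}_{\geq 0}$. Concretely, for the base case $j=1$ the single step at $q=n-1$ is valid because $n \neq 0$; for the inductive step I apply the single step at $q = n-j-1$ (valid because $n-j \neq 0$ for a positive integer $j$), reducing $w(n-j-1,0)$ to a finite sum of modes at index $n-j$, and then invoke the inductive hypothesis on each of the finitely many elements $L(-1)\mathcal{N}^k w$.

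The only real obstacle, and the sole place the hypothesis is used, is controlling these denominators: every raising step divides by $q+1$, and the argument would break exactly if some intermediate index $q+1$ were $0$, which $n \notin \mathbb{Z}_{\geq 0}$ forbids. The remaining work --- justifying the coefficient comparison in \eqref{eq:Y_0 derivative} and observing that the index set $I$ stays finite (a consequence of the termination of the $\mathcal{N}$-recursion at each stage) --- is routine.
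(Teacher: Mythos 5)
Your proposal is correct and follows essentially the same route as the paper: both extract the mode relation $(L(-1)u)(n,0) = -n\,u(n-1,0) - (\mathcal{N}u)(n-1,0)$ from \eqref{eq:Y_0 derivative}, terminate the same-index recursion via the local nilpotency of $\mathcal{N}$ to get the single-step raising formula, and iterate $j$ times using $n \notin \mathbb{Z}_{\geq 0}$ to keep all denominators $n, n-1, \dots, n-j+1$ nonzero. The only difference is presentational (your induction on $j$ versus the paper's explicit multi-index sum over $k_1, \dots, k_j$).
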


\begin{proof}
Equation \eqref{eq:Y_0 derivative} gives
\begin{align*}
(L(-1)u)(n,0)  = (-n)u(n-1,0) - (\mathcal{N}u)(n-1,0).
\end{align*}
We repeatedly use this relation to obtain
\begin{align*}
u(n-1,0) &= \sum_{k = 1}^K \left( \frac{-1}{n}\right)^k (L(-1)\mathcal{N}^{k-1} u)(n,0) + \left(\frac{-1}{n}\right)^K (\mathcal{N}^{K} u)(n-1,0) \\
&= \sum_{k = 1}^\infty \left( \frac{-1}{n}\right)^k (L(-1)\mathcal{N}^{k-1} u)(n,0) ,
\end{align*}
where $ K $ is chosen sufficiently large so that $ \mathcal{N}^{K} u = 0 $. Since $ n \notin \mathbb{Z}_{\geq 0} $, we can apply this repeatedly to obtain
\begin{align*}
u(n-j,0) = \sum_{k_1, \dots, k_j = 1}^\infty \left( \frac{-1}{n-j+1}\right)^{k_j} \cdots \left( \frac{-1}{n}\right)^{k_1} (L(-1)^j \mathcal{N}^{k_1 + \dots + k_j -j} u)(n,0).
\end{align*}
This sum is finite from the local nilpotency of $ \mathcal{N} $.
\end{proof}

\section{Cofiniteness properties} \label{sec: cofiniteness}

We will introduce a notion of $ C_n $-cofiniteness for weak $ g $-twisted $ V $-modules. At the end of this section, we will show that all finitely-generated weak $ g $-twisted $ V $-modules for a $ C_2 $-cofinite CFT-type vertex operator algebra $ V $ are $ C_n $-cofinite for all $ n \in \mathbb{Z}_{>0} $.

\begin{definition}
Let $ W $ be a weak $ g $-twisted $ V $-module. For each $ n \in \mathbb{Z}_{>0} $, we define $ C_n(W) $ to be the subspace of $ W $ spanned by the elements
\begin{equation}
u(\alpha -  n ,0) w , \qquad \text{for all } u \in V^{[\alpha]}, \  \alpha \in P(V), \ w \in W .
\end{equation}
When $ n = 1 $, we impose the extra condition that $ u \in V_+ = \coprod_{m \in \mathbb{Z}_{> 0}} V_{(m)} $.  We say that $ W $ is \emph{$ C_n $-cofinite} if $ \dim W / C_n(W) < \infty $.
\end{definition}

\begin{remark}
For $ C_1 $-cofiniteness, we impose $ u \in V_+ $ to avoid the case $ u = \one $. Since $ \one(-1) w = \id_W w = w $, this case would trivially make $ C_1(W) = W $. Our definition is analogous the definition of $ C_1 $-cofiniteness in \cite{HuangDEs} that provides convergence of correlation functions formed by a product of intertwining operators. Other definitions for $ C_1 $-cofiniteness can be made, for example in \cite{Lifinite}.
\end{remark}

From \eqref{eq: log remove} we have
\begin{equation} \label{eq:Y to Y_0}
\begin{aligned}
Y_W(u,x) &= (Y_W)_0(x^{-\mathcal{N}} u,x) = \sum_{k \geq 0} \frac{(-1)^k}{k!} (Y_W)_0(\mathcal{N}^k u,x) (\log x)^k \\
&= \sum_{k = 0}^K \sum_{n \in \mathbb{C}}  \frac{(-1)^k}{k!} (\mathcal{N}^k u)(n,0) x^{-n-1} (\log x)^k.
\end{aligned}
\end{equation}  
So, we see that $ C_n(W) $ is actually equal to the (a priori larger) subspace spanned by the elements
\begin{equation}
u(\alpha -  n ,k) w , \qquad \text{for all } u \in V^{[\alpha]}, \  \alpha \in P(V),\  k \in \mathbb{Z}_{\geq 0},\ w \in W.
\end{equation}
When $ n = 1 $, we still impose the extra condition that $ u \in V_+ $. Using \eqref{eq:sum index}, \eqref{eq: log remove} and the linearity of $ Y_W $, we see that we can also equivalently define
\begin{equation}
\begin{aligned}
C_1(W) &= \{ \res_x x^{-1} Y_W ( x^\mathcal{L} u , x) w : \text{for all } u \in V_+ \text{ and } w \in W \} \quad \text{and} \\
C_n(W) &= \{ \res_x x^{-n} Y_W ( x^\mathcal{L} u , x) w : \text{for all } u \in V \text{ and } w \in W \} \quad \text{ when } n \geq 2 .
\end{aligned}
\end{equation}
Note that when $ g = \id_V $, this definition agrees with the usual definition of $ C_n $-cofiniteness for untwisted $ V $-modules, specifically when $ n = 2 $ and $ V $ is itself viewed as an untwisted $ V $-module.

\begin{definition}
We say a vertex operator algebra $ V $ is of \emph{CFT type} (also known as \emph{positive energy}) if $ V_{(n)} = 0 $ when $ n < 0 $, and $ V_{(0)} = \mathbb{C}\one $.
\end{definition}

These are natural assumptions satisfied by common examples: affine vertex operator algebras, lattice vertex operator algebras, Virasoro vertex operator algebras, etc. 
 
\begin{proposition}
Let $ n \in \mathbb{Z}_{\geq 3} $ and let $ W $ be a $ g $-twisted $ C_n $-cofinite $ V $-module. Then $ W $ is $ C_{n-1} $-cofinite. Furthermore, if $ V $ is of CFT type and $ W $ is $ C_2 $-cofinite, then $ W $ is $ C_1 $-cofinite.
\end{proposition}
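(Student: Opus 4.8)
The plan is to deduce both claims from honest subspace inclusions: $C_n(W) \subseteq C_{n-1}(W)$ when $n \geq 3$, and $C_2(W) \subseteq C_1(W)$ when $V$ is of CFT type. Once these are in hand, the canonical quotient maps $W/C_n(W) \twoheadrightarrow W/C_{n-1}(W)$ and $W/C_2(W) \twoheadrightarrow W/C_1(W)$ are surjective, so finite codimension of the smaller space forces finite codimension of the larger; thus $C_n$-cofiniteness yields $C_{n-1}$-cofiniteness and $C_2$-cofiniteness yields $C_1$-cofiniteness.

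The common tool is the log-free $L(-1)$-derivative identity \eqref{eq:Y_0 derivative}, which in modes reads $(L(-1)u)(m,0) = -m\,u(m-1,0) - (\mathcal{N}u)(m-1,0)$ for $u \in V^{[\alpha]}$; this is exactly the recursion behind Lemma \ref{lem:lower modes}. Specializing its $j = 1$ case to the index $\alpha - (n-1)$, and recalling from the proof of that lemma that the vectors produced are of the form $L(-1)\mathcal{N}^k u$—which remain in $V^{[\alpha]}$, since $L(-1)$ commutes with $g$ (as $g\omega = \omega$) and hence preserves $V^{[\alpha]}$, while $\mathcal{N}$ preserves each $V^{[\alpha]}$—I would obtain the finite expansion
\begin{equation*}
u(\alpha - n, 0) = \sum_{k \geq 0} \left( \frac{-1}{\alpha - n + 1} \right)^{k+1} (L(-1)\mathcal{N}^k u)(\alpha - (n-1), 0).
\end{equation*}
Because $\operatorname{Re}\alpha \in [0,1)$ and $n \geq 2$, the index $\alpha - (n-1)$ has negative real part; in particular it is not a non-negative integer, so Lemma \ref{lem:lower modes} applies and the denominator never vanishes.

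For $n \geq 3$ the target index $\alpha - (n-1)$ is the defining index of $C_{n-1}(W)$, and since $n - 1 \geq 2$ carries no positivity requirement, each summand $(L(-1)\mathcal{N}^k u)(\alpha - (n-1),0)w$ is a $C_{n-1}(W)$-generator; this gives $C_n(W) \subseteq C_{n-1}(W)$. I expect the CFT-type statement to be the genuine obstacle, because the target $C_1(W)$ carries the extra constraint that the acting vector lie in $V_+$. Here I must verify that every $L(-1)\mathcal{N}^k u$ has strictly positive weight, and this is precisely where CFT type is indispensable: $\mathcal{N}$ preserves weight and, since $V_{(m)} = 0$ for $m < 0$, the vector $\mathcal{N}^k u$ has weight $\geq 0$; as $L(-1)$ raises weight by one, $L(-1)\mathcal{N}^k u$ then has weight $\geq 1$ unless it vanishes. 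The borderline weight-zero case is controlled by $V_{(0)} = \mathbb{C}\one$ together with $L(-1)\one = 0$, which forces those terms to be zero. Hence each nonzero $L(-1)\mathcal{N}^k u$ lies in $V_+^{[\alpha]}$, its mode at index $\alpha - 1$ is a legitimate $C_1(W)$-generator, and we conclude $C_2(W) \subseteq C_1(W)$, completing the argument.
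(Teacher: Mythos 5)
Your proposal is correct and follows essentially the same route as the paper: both establish the inclusions $C_n(W)\subseteq C_{n-1}(W)$ (and $C_2(W)\subseteq C_1(W)$ under the CFT-type hypothesis) by invoking the $j=1$ case of Lemma \ref{lem:lower modes}, whose proof exhibits the replacement vectors as $L(-1)\mathcal{N}^k u$, hence as elements of $V_+$ when $V$ is of CFT type. Your added checks (non-vanishing of the denominator since $\Re(\alpha-(n-1))<0$, and preservation of the $\mathcal{S}$-eigenspace $V^{[\alpha]}$ by $L(-1)$ and $\mathcal{N}$) are details the paper leaves implicit, not a different argument.
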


\begin{proof}
We know that $ C_n (W) $ is spanned by 
\begin{align*}
u(\alpha -  n ,0) w , \qquad \text{for all } u \in V^{[\alpha]}, \  \alpha \in P(V), \ w \in W .
\end{align*} 
Since $ \alpha -  (n-1) \notin \mathbb{Z}_{\geq 0} $, by Lemma \ref{lem:lower modes}, we can re-express $ u(\alpha -  n ,0)w $ as a finite sum $ \sum_{i \in I} v_i(\alpha - (n-1))w $ for some $ v_i \in V $. Hence $ C_n(W) \subseteq C_{n-1}(W) $. Thus, $ \dim W/ C_{n-1}(W) \leq \dim W/ C_{n}(W) < \infty $. 

In the case that $ n = 2 $ and $ V $ is of CFT type, the proof of Lemma \ref{lem:lower modes} shows that the $ v_i $ are in the image of $ L(-1) $, hence are indeed in $ V_+ $.
\end{proof}

In what follows, we fix a weak $ g $-twisted $ V $-module $ W $. From here on, we will usually omit the subscript $ W $ from $ Y_W $ and $ (Y_W)_0 $ for brevity. Whenever omitted, it will be clear from context whether $ Y $ is for the algebra $ V $ or a module $ W $. We will also write $ u(n) $ to denote $ u(n,0) $.

We associate to $ W $, the algebra $ E $ defined to be the subalgebra of $ \End W $ generated by the coefficients of $ Y(u,x) \in (\End W) \{x\} [ \log x ] $, for all $ u \in V $. By \eqref{eq:eigenspace decomp} and \eqref{eq:Y to Y_0}, it is sufficient to think of $ E $ as the subalgebra of $ \End W $ generated by the coefficients of $ Y_0(u,x) \in (\End W) \{x\} $ for all $ u \in V^{[\alpha]} $, $ \alpha \in P(V) $. Define the filtration 
\begin{equation}
E^{(0)} \subseteq E^{(1)} \subseteq \cdots \subseteq E,
\end{equation}
where $ E^{(s)} $ is spanned by elements of the form 
\begin{equation}
u_1(\alpha_1 - n_1) \cdots u_k(\alpha_k - n_k) 
\end{equation}
with $ k \in \mathbb{Z}_{>0} $,  $ n_j \in \mathbb{Z} $, and $ u_j \in V^{[\alpha_j]} $ satisfying $ \sum_{j=1}^k \wt {u_j} \leq s $. This filtration is compatible with the algebra structure on $ E $ in the sense that $ E^{(s)} E^{(t)} \subseteq E^{(s+t)} $. For each $ E^{(s)} $, we define the filtration 
\begin{equation}
 \cdots \subseteq E^{(s,-1)}  \subseteq E^{(s,0)} \subseteq E^{(s,1)} \subseteq \cdots \subseteq E^{(s)} ,
\end{equation}
where $ E^{(s,t)} $ is spanned by elements of the form 
\begin{equation}
u_1(\alpha_1 - n_1) \cdots u_k(\alpha_k - n_k) 
\end{equation}
with $ k \in \mathbb{Z}_{>0} $,  $ n_j \in \mathbb{Z} $, $ u_j \in V^{[\alpha_j]} $ satisfying $ \sum_{j=1}^k \wt {u_j} \leq s $, and there is some $ i \in\{ 1, \dots, k \}$ such that $ n_i \geq -t $. 

Recall that a sequence $ (f_i)_{i = 0}^\infty $ of endomorphisms of $ W $ is \emph{summable} if for every $ w \in W $, $ f_i w = 0 $ for all but finitely many $ i \geq 0 $. In this case, we write $ \sum_{ i \geq 0} f_i $ to denote the element in $ \End W $ defined by $ \left(\sum_{ i \geq 0} f_i \right) w = \sum_{ i \geq 0} f_i w $. Given a subspace $ U \subseteq \End W $, we will write $ \sum_{ i \geq 0} f_i \in U $ as shorthand to denote that $ f_i \in U $ for each $ i \geq 0 $.

\begin{remark}
Notice that the filtrations for $ E $ come from the $ L(0) $-grading on $ V $ only. This is why we do not need grading conditions on $ W $. 
\end{remark}

In the following, we will assume that $ u $ and $ v $ are weight-homogeneous eigenvectors of $ \mathcal{S} $ with eigenvalue $ \alpha $ and $ \beta $, respectively. Since $ L(0) $ and $ \mathcal{S} $  act semisimply and commute, this assumption is sufficient to describe all general $ u , v \in V $. Denote the $ \mathcal{S} $-eigenvalue of $ u(n) v $ by $ s(\alpha,\beta) $, that is
\begin{align*}
s(\alpha,\beta) = \begin{cases}
\alpha + \beta &\text{if } \Re(\alpha+ \beta) < 1, \\
\alpha + \beta - 1 &\text{if } \Re(\alpha + \beta) \geq 1. \\
\end{cases}
\end{align*}

\begin{lemma}
\label{lem:swap}
For all $ m, n \in \mathbb{Z} $, we have
\begin{equation}
[u( \alpha + m), v(\beta + n) ] = \sum_{j \geq 0}\left(  \left( \binom{m +\mathcal{L}}{j} u \right) (j) v \right)(\alpha + \beta + m+n - j).
\end{equation} 
Furthermore, 
\begin{equation}
[u(\alpha + m), v(\beta + n) ] \in E^{(\wt u + \wt v - 1)} .
\end{equation} 
\end{lemma}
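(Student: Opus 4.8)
The plan is to derive the commutator formula directly from the Jacobi identity in its log-free form \eqref{eq:jacobi Y_0}, then extract the filtration membership by bookkeeping weights. First I would rewrite the modes in the commutator. By the reindexing in \eqref{eq:sum index}, for $u \in V^{[\alpha]}$ the modes appearing in $(Y_W)_0(u,x)$ are exactly the $u(\alpha+m,0) = u(\alpha+m)$ with $m \in \mathbb{Z}$ (after absorbing the shift by $1$ into the integer $m$), so the objects on both sides are legitimate modes. The strategy is to apply the residue operator $\res_{x_0}\res_{x_1}\res_{x_2}\, x_0^{\,?} x_1^{\,?} x_2^{\,?}$ to both sides of \eqref{eq:jacobi Y_0}, choosing the exponents so that the left-hand side produces precisely $[u(\alpha+m),v(\beta+n)]$. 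Concretely, taking $\res_{x_1}\res_{x_2}$ against $x_1^{\alpha+m} x_2^{\beta+n}$ (up to the standard off-by-one conventions hidden in the $x^{-n-1}$ indexing) turns the two $\delta$-function terms into $(Y_W)_0$-products that assemble into the bracket, and the $\res_{x_0}$ collapses the right-hand side.

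Next I would handle the right-hand side of \eqref{eq:jacobi Y_0}. The factor $x_1^{-1}\delta\!\left(\frac{x_2+x_0}{x_1}\right)$ forces $x_1 = x_2 + x_0$ after taking $\res_{x_1}$, and the factor $\left(\tfrac{x_2}{x_1}\right)^\alpha$ together with the binomial expansion of $\left(1+\tfrac{x_0}{x_2}\right)^{\mathcal{L}} = \sum_{j\geq 0}\binom{\mathcal{L}}{j} x_0^j x_2^{-j}$ produces the sum over $j$. The delta function substitution combined with the expansion of $(x_2+x_0)^{\alpha+m}$ yields another binomial factor $\binom{m}{i}$ (or more precisely a factor that combines with $\binom{\mathcal{L}}{j}$ into $\binom{m+\mathcal{L}}{j}$ via the Vandermonde-type identity $\sum \binom{m}{j-i}\binom{\mathcal{L}}{i} = \binom{m+\mathcal{L}}{j}$), which is exactly where the $\binom{m+\mathcal{L}}{j}$ appearing in the statement comes from. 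After taking the $\res_{x_0}$ and matching powers, the $\left(\binom{m+\mathcal{L}}{j} u\right)(j) v$ term emerges inside the outer $(Y_W)_0$, giving the mode $\left(\left(\binom{m+\mathcal{L}}{j}u\right)(j)v\right)(s(\alpha,\beta)+\text{(integer)})$; I would check that the integer shift works out to $\alpha+\beta+m+n-j$ as claimed, keeping careful track of the $\mathcal{S}$-eigenvalue bookkeeping through $s(\alpha,\beta)$.

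For the filtration statement, the key observation is that $\left(\binom{m+\mathcal{L}}{j}u\right)(j)v$ is, by the usual weight-grading of the product on $V$, a vector of weight $\wt u + \wt v - j - 1$ (since the $j$-th mode lowers weight by $j+1$). The resulting mode of this vector lies in $E^{(\wt u + \wt v - j - 1)} \subseteq E^{(\wt u + \wt v - 1)}$ for every $j \geq 0$, because the filtration degree is measured by the total weight of the single vector whose mode we take, and $j \geq 0$ only decreases it. Summing over $j$ (a finite sum by lower truncation applied to any fixed $w \in W$) keeps us inside $E^{(\wt u+\wt v-1)}$, which is closed under the relevant summation. I would note that $\binom{m+\mathcal{L}}{j}u$ is still weight-homogeneous of weight $\wt u$ since $\mathcal{L}$ commutes with $L(0)$, so taking its $j$-th product with $v$ indeed lands in weight $\wt u + \wt v - j - 1$.

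The main obstacle I anticipate is the index bookkeeping: reconciling the $x^{-n-1}$ mode-indexing convention in $Y_W$ with the clean $u(\alpha+m)$ notation in the statement, and verifying that the combinatorial collapse of the two binomial factors into $\binom{m+\mathcal{L}}{j}$ comes out exactly right including all the $\alpha$-dependent prefactors from $\left(\tfrac{x_2}{x_1}\right)^\alpha$ and the $x_1^{\alpha+m}$, $x_2^{\beta+n}$ weights. This requires disciplined residue calculus with the formal delta functions, expanding everything in nonnegative powers of the correct variables so that the $\delta$-function identities apply; the weight/filtration conclusion is then essentially immediate once the formula is established.
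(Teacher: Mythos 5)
Your proposal is correct and follows essentially the same route as the paper: apply $\res_{x_0}\res_{x_1}\res_{x_2}\,x_1^{\alpha+m}x_2^{\beta+n}$ to the log-free Jacobi identity \eqref{eq:jacobi Y_0}, let the $\delta$-function substitution and the $\left(\tfrac{x_2}{x_1}\right)^{\alpha}$ factor reduce everything to integer-power binomial expansions (the paper combines $(x_2+x_0)^m$ with $\left(1+\tfrac{x_0}{x_2}\right)^{\mathcal{L}}$ into $\left(1+\tfrac{x_0}{x_2}\right)^{m+\mathcal{L}}$ directly, which is the same as your Vandermonde collapse), and then read off the filtration degree from $\wt\left(\binom{m+\mathcal{L}}{j}u\right)(j)v=\wt u+\wt v-j-1\leq \wt u+\wt v-1$. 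The only minor quibble is that the finiteness of the $j$-sum comes from lower truncation in $V$ itself (i.e., $u(j)v=0$ for $j\gg 0$) rather than from truncation on $W$, but this does not affect the argument.
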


\begin{proof}
Let $ m,n \in \mathbb{Z} $. From the Jacobi identity \eqref{eq:jacobi Y_0}, we have
\begin{align*}
&[u(\alpha + m), v(\beta + n) ] \\
& = \res_{x_0} \res_{x_1} \res_{x_2} x_1^{\alpha + m} x_2^{\beta + n} \left( x_0^{-1} \delta \left( \frac{ x_1 - x_2}{x_0} \right)Y_0(u,x_1) Y_0(v,x_2) \right. \\
&\qquad \left.- x_0^{-1} \delta \left( \frac{ -x_2 + x_1}{x_0} \right)Y_0(v,x_2) Y_0(u,x_1) \right) \\
&= \res_{x_0} \res_{x_1} \res_{x_2} x_1^{\alpha + m} x_2^{\beta + n} \left(  x_1^{-1} \delta \left( \frac{ x_2 + x_0}{x_1} \right) \left( \frac{x_2}{x_1}\right)^{\alpha} Y_0 \left( Y \left( \left( 1 + \frac{ x_0}{x_2} \right)^{\mathcal{L}} u, x_0 \right) v,x_2 \right) \right) \\
&= \res_{x_0} \res_{x_2}  x_2^{\alpha+\beta + n} (x_2 + x_0)^{m} Y_0 \left( Y \left( \left( 1 + \frac{ x_0}{x_2} \right)^{\mathcal{L}} u, x_0 \right) v,x_2 \right) \\
&= \res_{x_0} \res_{x_2}  x_2^{\alpha+\beta + m + n}  Y_0 \left( Y \left( \left( 1 + \frac{ x_0}{x_2} \right)^{m + \mathcal{L}} u, x_0 \right) v,x_2 \right) \\
&= \res_{x_0} \res_{x_2}  x_2^{\alpha+\beta + m + n}  \sum_{p \in \mathbb{Z}} \sum_{q \in s(\alpha,\beta) + \mathbb{Z} }  \left( \left( \sum_{j \geq 0} \binom{ m + \mathcal{L}}{j} x_0^j x_2^{-j}  u \right) (p) v \right)(q) x_0^{-p-1} x_2^{-q-1} \\
&= \sum_{j \geq 0}\left(  \left( \binom{m +\mathcal{L}}{ j} u \right) (j) v \right)(\alpha + \beta + m+n - j) . 
\end{align*} 
 And we observe that, for all $ j \geq 0 $, 
\begin{align*}
\wt \left( \binom{ m + \mathcal{L} }{j} u \right) (j) v &= \wt u - j - 1 + \wt v \leq  \wt u  + \wt v - 1. 
\end{align*} 
Hence, the sum is finite and an element in $ E^{(\wt u + \wt v - 1)} $.
\end{proof}

\begin{lemma}
\label{lem:C2 elements}
For all $ m, n \in \mathbb{Z} $, we have
\begin{equation}
\begin{aligned}
(u(m) v)(s(\alpha,\beta) +n ) &= -\sum_{j \geq 1} \left( \left( \binom{ \mathcal{L} }{j}u \right)(m +j) v \right)(s(\alpha,\beta) +n - j) \\
& \quad +  \sum_{j \geq 0} (-1)^j \binom{m}{j} u(\alpha + m -j)v( s(\alpha,\beta) - \alpha + n + j ) \\
&  \quad + \sum_{j \geq 0} (-1)^{m-j+1} \binom{m}{j} v( s(\alpha,\beta) - \alpha + m+ n-j) u(\alpha + j). 
\end{aligned} 
\end{equation}
Furthermore, 
\begin{equation}
(u(m) v)(s(\alpha,\beta) +n ) \in E^{(\wt u(m) v - 1)}, \qquad \text{when } m \leq -2 ,
\end{equation}
and in particular, all modes of $ Y_0(u,x) $ are in $ E^{(\wt u - 1)} $ whenever $ u $ is a homogeneous element in $ C_2(V) $.
\end{lemma}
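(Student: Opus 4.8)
The plan is to obtain the displayed identity by applying a single residue operator to both sides of the log-free Jacobi identity \eqref{eq:jacobi Y_0}, in exactly the spirit of the proof of Lemma \ref{lem:swap}, and then to read off the filtration membership term by term. Concretely, I would apply
\[
\res_{x_0}\res_{x_1}\res_{x_2}\, x_0^{m} x_1^{\alpha} x_2^{s(\alpha,\beta)+n-\alpha}
\]
to \eqref{eq:jacobi Y_0}. The role of the exponents is that $x_1^{\alpha}$ cancels the $(x_2/x_1)^{\alpha}$ factor on the right-hand side once the $\delta$-function in $x_1$ is used to substitute $x_1=x_2+x_0$, leaving the clean residue $\res_{x_0}\res_{x_2}\, x_0^{m} x_2^{s(\alpha,\beta)+n}\,(Y_W)_0\big(Y((1+x_0/x_2)^{\mathcal{L}}u,x_0)v,x_2\big)$.

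For the right-hand side I would expand $(1+x_0/x_2)^{\mathcal{L}}u=\sum_{j\geq 0}\binom{\mathcal{L}}{j}u\,x_0^{j}x_2^{-j}$ and take residues: the $x_0$-residue forces the inner mode index to be $m+j$ and the $x_2$-residue forces the outer mode index to be $s(\alpha,\beta)+n-j$, producing $\sum_{j\geq 0}\big((\binom{\mathcal{L}}{j}u)(m+j)v\big)(s(\alpha,\beta)+n-j)$. Isolating the $j=0$ summand as $(u(m)v)(s(\alpha,\beta)+n)$ and moving the $j\geq 1$ tail across the equality yields the first line of the claimed formula. For the left-hand side I would interpret the two $\delta$-functions in the regions $|x_1|>|x_2|$ and $|x_2|>|x_1|$ respectively, perform $\res_{x_0}$ to replace $x_0^{-1}\delta(\cdots)$ by $(x_1-x_2)^{m}$ (resp. $(-x_2+x_1)^{m}$), and expand these binomially; the two terms then give precisely the second and third lines, with binomial coefficients $(-1)^{j}\binom{m}{j}$ and $(-1)^{m-j+1}\binom{m}{j}$.

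For the filtration statement I would compare each group of summands against the definition of $E^{(s)}$, using $\wt(u(m)v)=\wt u+\wt v-m-1$, so $\wt(u(m)v)-1=\wt u+\wt v-m-2$. The first-line summands are each a single mode $z(p)$ of the vector $z=(\binom{\mathcal{L}}{j}u)(m+j)v$, which lies in $V^{[s(\alpha,\beta)]}$ (as $\mathcal{L}$ preserves $\mathcal{S}$-eigenspaces and commutes with $L(0)$) and has weight $\wt u+\wt v-m-1-j\leq\wt(u(m)v)-1$ for $j\geq 1$; since a single mode of a weight-$d$ vector lies in $E^{(d)}$, each such summand lies in $E^{(\wt(u(m)v)-1)}$ with no hypothesis on $m$. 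The second- and third-line summands are each a product of two modes, hence lie in $E^{(\wt u+\wt v)}$, and this is contained in $E^{(\wt(u(m)v)-1)}$ exactly when $\wt u+\wt v\leq\wt u+\wt v-m-2$, i.e. when $m\leq -2$. Combining the three groups gives the displayed membership for $m\leq -2$. The final assertion is then immediate: a homogeneous element of $C_2(V)$ is a sum of vectors $a(-2)b$ (reducible to $\mathcal{S}$-eigenvectors $a,b$ of definite weight), and taking $m=-2$ shows every mode $(a(-2)b)(q)$ lies in $E^{(\wt(a(-2)b)-1)}=E^{(\wt u-1)}$; since the modes of $(Y_W)_0(u,x)$ are exactly these, the claim follows.

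The residue bookkeeping is parallel to Lemma \ref{lem:swap}, so the points needing care are (i) choosing the correct expansion region for each $\delta$-function, so that $(x_1-x_2)^{m}$ and $(-x_2+x_1)^{m}$ expand in nonnegative powers of $x_2$ and $x_1$ respectively, and (ii) checking that all three sums are genuinely finite or summable: the first sum terminates because $(\binom{\mathcal{L}}{j}u)(m+j)v=0$ in $V$ once $m+j$ exceeds the lower-truncation bound, while the second and third sums are summable on each $w\in W$ because $v(\cdots+j)w=0$ and $u(\alpha+j)w=0$ for large $j$. I expect the weight bookkeeping in the filtration step to be the only genuinely delicate point — in particular recognizing that the single-mode first-line terms already sit low enough for every $m$, so that the restriction $m\leq -2$ is forced purely by the two two-mode terms.
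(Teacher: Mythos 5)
Your proposal is correct and follows essentially the same route as the paper: apply $\res_{x_0}\res_{x_1}\res_{x_2}\,x_0^{m}x_1^{\alpha}x_2^{s(\alpha,\beta)-\alpha+n}$ to the log-free Jacobi identity \eqref{eq:jacobi Y_0}, equate the iterate side (giving the $j\geq 0$ sum of modes of $(\binom{\mathcal{L}}{j}u)(m+j)v$, with the $j=0$ term isolated) to the commutator side (giving the two binomially expanded product-of-modes sums), and then verify the filtration membership by the same weight count $\wt\bigl((\binom{\mathcal{L}}{j}u)(m+j)v\bigr)=\wt u+\wt v-m-1-j$ and $\wt u+\wt v\leq\wt(u(m)v)-1$ iff $m\leq -2$. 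The bookkeeping points you flag (expansion regions, summability, and that only the two-mode terms force $m\leq -2$) are exactly the ones the paper's proof addresses.
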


\begin{proof}
Let $ m,n \in \mathbb{Z} $. From the Jacobi identity \eqref{eq:jacobi Y_0}, we have
\begin{align*}
& \res_{x_0} \res_{x_1} \res_{x_2} x_0^m x_1^\alpha x_2^{s(\alpha,\beta) - \alpha + n} x_1^{-1} \delta \left( \frac{ x_2 + x_0}{x_1} \right) \left( \frac{x_2}{x_1}\right)^{\alpha} Y_0 \left( Y \left( \left( 1 + \frac{ x_0}{x_2} \right)^{\mathcal{L}} u, x_0 \right) v,x_2 \right) \\
&=  \res_{x_0} \res_{x_2} x_0^m  x_2^{s(\alpha,\beta) + n} Y_0 \left( Y \left( \left( 1 + \frac{ x_0}{x_2} \right)^{\mathcal{L}} u, x_0 \right) v,x_2 \right) \\
&=  \res_{x_0} \res_{x_2} x_0^m  x_2^{s(\alpha,\beta)+ n} \sum_{p \in \mathbb{Z}} \sum_{q \in s(\alpha,\beta) + \mathbb{Z}} \left( \left( \sum_{j \geq 0} {\mathcal{L} \choose j} x_0^j   x_2^{- j} u \right) (p) v \right)(q) x_0^{-p-1} x_2^{-q-1} \\
&= \sum_{j \geq 0} \left( \left( {\mathcal{L} \choose j}u \right)(m +j) v \right)(s(\alpha,\beta) +n - j),  
\end{align*}
which is equal to
\begin{align*}
& \res_{x_0} \res_{x_1} \res_{x_2} x_0^m x_1^\alpha x_2^{s(\alpha,\beta) - \alpha + n} \left(x_0^{-1} \delta \left( \frac{ x_1 - x_2}{x_0} \right)Y_0(u,x_1) Y_0(v,x_2) \right. \\
& \qquad \left. - x_0^{-1} \delta \left( \frac{ -x_2 + x_1}{x_0} \right)Y_0(v,x_2) Y_0(u,x_1)  \right) \\
&= \res_{x_1} \res_{x_2} x_1^\alpha x_2^{s(\alpha,\beta) - \alpha + n} \Big( (x_1 - x_2)^m Y_0(u,x_1) Y_0(v,x_2) - (- x_2 + x_1)^m Y_0(v,x_2) Y_0(u,x_1)  \Big) \\
&= \res_{x_1} \res_{x_2} x_1^\alpha x_2^{s(\alpha,\beta) - \alpha + n} \Bigg( \sum_{j \geq 0} (-1)^j {m \choose j} x_1^{m-j} x_2^j \sum_{p \in \alpha + \mathbb{Z}} u(p) x_1^{-p-1} \sum_{q \in \beta + \mathbb{Z}} v(q) x_2^{-q-1} \\
&\qquad \qquad + \sum_{j \geq 0} (-1)^{m-j+1} {m \choose j} x_2^{m-j} x_1^j \sum_{q \in \beta + \mathbb{Z}} v(q) x_2^{-q-1}   \sum_{p \in \alpha + \mathbb{Z}} u(p) x_1^{-p-1} \Bigg) \\
&= \sum_{j \geq 0} (-1)^j {m \choose j} u(\alpha + m -j)v( s(\alpha,\beta) - \alpha + n + j ) \\
& \qquad + \sum_{j \geq 0} (-1)^{m-j+1} {m \choose j} v( s(\alpha,\beta) - \alpha + m+ n-j) u(\alpha + j).
\end{align*}
Rearranging for the term $ (u(m)v)(s(\alpha, \beta) +n)  $ gives the desired relation. Observe that, for all $ j \geq 1 $,
\begin{align*}
\wt \left( { \mathcal{L} \choose j} u \right)(m + j) v = \wt u - m - j - 1 + \wt v < \wt u - m - 1 + \wt v = \wt (u(m) v ), 
\end{align*} 
and
\begin{align*}
\wt u + \wt v < \wt u - m - 1 + \wt v = \wt (u(m) v )
\end{align*}
when $ m \leq - 2 $. The summands vanish for sufficiently large $ j \geq  0 $ when acting on a module element. Hence, each series is summable and each summand is in $ E^{(\wt u(m) v - 1)} $.
\end{proof}

\begin{remark}
The last inequality in the proof of the previous lemma shows why $ C_2 $-cofiniteness of $ V $ is important.
\end{remark}

We now introduce a notion of normal ordering for the log-free parts of twisted vertex operators. We can extend these definitions to the twisted vertex operators containing logarithms using \eqref{eq: log remove}, but we will not need this here. We define
\begin{equation}
Y_0^+ (u, x) =\sum_{n \in \alpha + \mathbb{Z}_{<0}} u(n) x^{-n-1} \quad \text{and} \quad Y_0^- (u, x) = \sum_{n \in \alpha + \mathbb{Z}_{\geq 0}} u(n) x^{-n-1} .
\end{equation}
The notations for the ``regular" and ``singular" parts of $ Y_0(u,x) $ are used here because $ x^\alpha Y_0^+ (u, x) $ and $ x^\alpha Y_0^- (u, x) $ contain the non-negative and negative integral powers of $ x $, respectively. Let $ a(x) \in (\End W)\{x\} $ be such that $ a(x) w \in W\{x\} $ is lower truncated for all $ w \in W $. Define the \emph{normal ordering} of $ Y_0(u,x_1) $ and $ a(x_2) $ to be
\begin{equation}\
\coln Y_0(u,x_1) a(x_2) \coln = Y_0^+(u,x_1) a(x_2) + a(x_2) Y_0^-(u,x_1).
\end{equation}
The specialization $ x_1 = x_2 = x $ is allowed, and provides the well-defined normal ordering
\begin{equation}\label{eq:norm order def}
\coln Y_0(u,x) a(x)\coln = Y_0^+(u,x) a(x)+ a(x) Y_0^-(u,x).
\end{equation}
Since $ \mathcal{S} $ acts semisimply on $ V $, normal ordering can be linearly extended to general $ u \in V $ by defining
\begin{equation}
Y_0^+ (u, x) =\sum_{n \in \mathbb{C}, \ \Re(n) < 0} u(n) x^{-n-1} \quad \text{and} \quad Y_0^- (u, x) = \sum_{n \in \mathbb{C}, \ \Re(n) \geq 0} u(n) x^{-n-1} .
\end{equation}

\begin{lemma}
\label{lem:norm order}
We have
\begin{equation}\label{eq:take u(-1) out Y}
\begin{aligned}
\coln Y_0(u,x) Y_0(v,x) \coln &= \sum_{j \geq 0} x^{-j} Y_0 \left( \left( {\mathcal{L} \choose j} u \right) (-1+j) v, x \right) .
\end{aligned}
\end{equation}
\end{lemma}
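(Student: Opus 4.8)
The plan is to prove \eqref{eq:take u(-1) out Y} by expanding both sides of the asserted identity into modes and comparing, for each $r \in s(\alpha,\beta)+\mathbb{Z}$, the coefficient of $x^{-r-1}$ in $(\End W)\{x\}$. The point is that the resulting mode identity is precisely the $m=-1$ instance of the computation already carried out in the proof of Lemma \ref{lem:C2 elements}, where the common value of the two expressions obtained there was recorded before the final rearrangement.

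First I would unwind the right-hand side. Since $\mathcal{L}=\mathcal{S}+\mathcal{N}$ preserves each eigenspace $V^{[\alpha]}$, we have $\binom{\mathcal{L}}{j}u\in V^{[\alpha]}$, so each vector $w_j:=\left(\binom{\mathcal{L}}{j}u\right)(-1+j)v$ lies in $V^{[s(\alpha,\beta)]}$ and $Y_0(w_j,x)=\sum_{q\in s(\alpha,\beta)+\mathbb{Z}}w_j(q)x^{-q-1}$. Reading off the coefficient of $x^{-r-1}$ in $\sum_{j\geq 0}x^{-j}Y_0(w_j,x)$ then shifts the mode index by $j$, giving, with $r=s(\alpha,\beta)+n$,
\[
\sum_{j\geq 0}\left(\left(\binom{\mathcal{L}}{j}u\right)(-1+j)v\right)\!\big(s(\alpha,\beta)+n-j\big).
\]
This is exactly the left-hand sum appearing in the middle of the proof of Lemma \ref{lem:C2 elements} with $m=-1$.

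Next I would expand the left-hand side via \eqref{eq:norm order def}, writing $\coln Y_0(u,x)Y_0(v,x)\coln=Y_0^+(u,x)Y_0(v,x)+Y_0(v,x)Y_0^-(u,x)$. Reindexing the regular modes of $u$ as $u(\alpha-1-j)$ and the singular modes as $u(\alpha+j)$ for $j\geq 0$, and extracting the coefficient of $x^{-r-1}$ from each product, yields
\[
\sum_{j\geq 0}u(\alpha-1-j)\,v\big(s(\alpha,\beta)-\alpha+n+j\big)+\sum_{j\geq 0}v\big(s(\alpha,\beta)-\alpha-1+n-j\big)\,u(\alpha+j).
\]
To finish, I would invoke the intermediate equality in the proof of Lemma \ref{lem:C2 elements} at $m=-1$: because $\binom{-1}{j}=(-1)^j$, the two prefactors $(-1)^j\binom{-1}{j}$ and $(-1)^{-1-j+1}\binom{-1}{j}$ both collapse to $1$, so the two binomial sums there become exactly the two sums just displayed, while the sum on the other side is exactly the coefficient computed from the right-hand side. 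Matching coefficients for every $r$ then establishes \eqref{eq:take u(-1) out Y}.

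The work here is entirely the index bookkeeping rather than any new input: the only real care needed is to align the regular/singular split of $Y_0(u,x)$ with the two ordered products $u(\alpha+m-j)v(\cdots)$ and $v(\cdots)u(\alpha+j)$ of Lemma \ref{lem:C2 elements}, and to verify that the sign factors cancel through $\binom{-1}{j}=(-1)^j$. All the rearrangements, the collapsing of sums, and the specialization $x_1=x_2=x$ implicit in \eqref{eq:norm order def} are legitimate because lower truncation makes each series summable when applied to any $w\in W$, so the identity is one of genuine elements of $(\End W)\{x\}$. (Alternatively, one could obtain the same result directly by applying $\res_{x_0}x_0^{-1}$ to the Jacobi identity \eqref{eq:jacobi Y_0} and recognizing the two $\delta$-function expansions as the regular and singular parts of $Y_0(u,x)$; this is the same computation organized by generating functions instead of modes.)
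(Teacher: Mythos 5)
Your proof is correct and is essentially the paper's own argument: the paper proves the lemma by applying $\res_{x_0}\res_{x_1}x_0^{-1}x_1^{\alpha}x_2^{-\alpha}$ to the Jacobi identity \eqref{eq:jacobi Y_0}, which is precisely the $m=-1$ residue computation you are reusing from the proof of Lemma \ref{lem:C2 elements}, only organized with $x_2$ as a generating variable rather than mode by mode. The identification of the two $\delta$-function expansions with $Y_0^{\pm}(u,x)$ and the sign cancellation via $\binom{-1}{j}=(-1)^j$ are exactly the steps the paper carries out.
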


\begin{proof}
Applying $ \res_{x_0} \res_{x_1} x_0^{-1} x_1^{\alpha}x_2^{-\alpha}  $ to the left-hand side of the Jacobi identity \eqref{eq:jacobi Y_0} gives
\begin{align*}
& \res_{x_0} \res_{x_1} x_0^{-1} x_1^{\alpha}x_2^{-\alpha} \left( x_0^{-1} \delta \left( \frac{ x_1 - x_2}{x_0} \right)Y_0(u,x_1) Y_0(v,x_2) \right. \\
& \qquad \left. - x_0^{-1} \delta \left( \frac{ -x_2 + x_1}{x_0} \right)Y_0(v,x_2) Y_0(u,x_1) \right) \\
&= \res_{x_1} x_1^{\alpha} x_2^{-\alpha} \left( (x_1 - x_2)^{-1} Y_0(u,x_1) Y_0(v,x_2) - (-x_2 + x_1)^{-1} Y_0(v,x_2) Y_0(u,x_1) \right) \\
&= \res_{x_1} x_1^{\alpha} x_2^{-\alpha} \left( \sum_{j \geq 0} x_1^{-1 - j} x_2^j \sum_{n \in \alpha + \mathbb{Z} } u(n) x_1^{-n-1} Y_0(v,x_2) + \sum_{j \geq 0} x_2^{-1 - j} x_1^j Y_0(v,x_2)\sum_{n \in \alpha + \mathbb{Z} } u(n) x_1^{-n-1}\right) \\
&= x_2^{-\alpha} \left( \sum_{j \geq 0} u(\alpha - 1 - j) x_2^j  Y_0(v,x_2) +  Y_0(v,x_2) \sum_{j \geq 0} u(\alpha + j) x_2^{-1 - j} \right) \\
&=   Y_0^+(u,x_2)  Y_0(v,x_2) +  Y_0(v,x_2) Y_0^-(u,x_2) ,
\end{align*}
which is equal to
\begin{align*}
& \res_{x_0} \res_{x_1} x_0^{-1} x_1^{\alpha} x_2^{-\alpha} x_1^{-1} \delta \left( \frac{ x_2 + x_0}{x_1} \right) \left( \frac{x_2}{x_1}\right)^{\alpha} Y_0 \left( Y \left( \left( 1 + \frac{ x_0}{x_2} \right)^{\mathcal{L}} u, x_0 \right) v,x_2 \right)  \\
&= \res_{x_0} x_0^{-1}  Y_0 \left( Y \left( \left( 1 + \frac{ x_0}{x_2} \right)^{\mathcal{L}} u, x_0 \right) v,x_2 \right)  \\
&= \res_{x_0}  x_0^{-1} Y_0 \left( \sum_{n \in \mathbb{Z}} \left( \sum_{j \geq 0}   x_0^j x_2^{-j} {\mathcal{L} \choose j} u  \right) (n) x_0^{-n-1} v, x_2 \right) \\
&=  \sum_{j \geq 0}  x_2^{-j} Y_0 \left(  \left( {\mathcal{L} \choose j} u  \right) (-1 + j)  v, x_2 \right) . \qedhere
\end{align*}
\end{proof}

All normal orderings of multiple twisted vertex operators will be chosen to be nested as 
\begin{equation}\label{eq:nested norm}
\coln Y_0(u_1,x) \cdots Y_0(u_k,x) a(x) \coln = \coln Y_0(u_1,x)  \coln Y_0(u_2,x) \cdots Y_0(u_{k},x) a(x)\coln \coln ,
\end{equation}
recursively. Observe that \eqref{eq:norm order def} acting on any $ w \in W $ is a lower-truncated element in $ W\{x\} $, making the nested normal ordering \eqref{eq:nested norm} well defined.
\begin{lemma} \label{lem:multi norm order}
Let $ u_1, \dots, u_k \in V $. The normal ordering of multiple twisted vertex operators can be expressed as
\begin{equation}
\label{eq:multi norm order}
\begin{aligned}
&\coln Y_0(u_1,x) \cdots Y_0(u_k,x) \coln \\
& \qquad = \sum_{j_1, \dots, j_{k-1} \geq 0} x^{-j_1 - \cdots - j_{k-1}} Y_0\left( \left( {\mathcal{L} \choose j_1}u_1 \right) (-1+j_1) \cdots \left( {\mathcal{L} \choose j_{k-1}}u_{k-1} \right)(-1+j_{k-1}) u_k , x \right)  .
\end{aligned}
\end{equation}

\end{lemma}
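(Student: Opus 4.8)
The plan is to induct on the number $k$ of twisted vertex operators, with Lemma~\ref{lem:norm order} furnishing the base case $k=2$. The recursive definition \eqref{eq:nested norm} of nested normal ordering is precisely what drives the induction: it lets me peel off the leftmost factor $Y_0(u_1,x)$ and reduce the problem to a normal ordering of the remaining $k-1$ operators, to which the inductive hypothesis applies directly.

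For the inductive step, I would assume the formula for any $k-1$ elements of $V$ and set $B(x) := \coln Y_0(u_2,x)\cdots Y_0(u_k,x)\coln$, so that \eqref{eq:nested norm} gives $\coln Y_0(u_1,x)\cdots Y_0(u_k,x)\coln = \coln Y_0(u_1,x)\,B(x)\coln$. By the inductive hypothesis, $B(x) = \sum_{j_2,\dots,j_{k-1}\geq 0} x^{-j_2-\cdots-j_{k-1}} Y_0(b,x)$, where $b = b(j_2,\dots,j_{k-1}) = \left({\mathcal L \choose j_2}u_2\right)(-1+j_2)\cdots\left({\mathcal L \choose j_{k-1}}u_{k-1}\right)(-1+j_{k-1})u_k \in V$. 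Before proceeding I would record that this sum is in fact \emph{finite}: applying the mode $\left({\mathcal L \choose j_i}u_i\right)(-1+j_i)$ lowers weight by $j_i$, so $\wt b = \sum_i \wt u_i - (j_2+\cdots+j_{k-1})$ drops below the lower bound of the $\mathbb{Z}$-grading of $V$ once the $j_i$ are large, forcing $b = 0$ (this is the same finiteness already implicit in Lemma~\ref{lem:norm order}). Consequently $B(x)$ is a finite sum of operators $Y_0(b,x)$, each of which sends any $w \in W$ to a lower-truncated element of $W\{x\}$, so the outer normal ordering $\coln Y_0(u_1,x)B(x)\coln$ is well defined.

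The rest is formal manipulation. Since $\coln Y_0(u_1,x)\,a(x)\coln = Y_0^+(u_1,x)a(x) + a(x)Y_0^-(u_1,x)$ is linear in its second argument, and each scalar power $x^{-j_2-\cdots-j_{k-1}}$ commutes with the operator-valued series $Y_0^{\pm}(u_1,x)$, I can distribute the outer normal ordering over the finite sum defining $B(x)$ to get $\coln Y_0(u_1,x)B(x)\coln = \sum_{j_2,\dots,j_{k-1}\geq 0} x^{-j_2-\cdots-j_{k-1}}\coln Y_0(u_1,x)Y_0(b,x)\coln$. Applying Lemma~\ref{lem:norm order} termwise, namely $\coln Y_0(u_1,x)Y_0(b,x)\coln = \sum_{j_1\geq 0} x^{-j_1}Y_0\!\left(\left({\mathcal L \choose j_1}u_1\right)(-1+j_1)b,x\right)$, and merging the sum over $j_1$ with the sum over $j_2,\dots,j_{k-1}$ produces exactly the claimed expression \eqref{eq:multi norm order}.

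The one genuinely delicate point, and the step I would be most careful about, is the passage from Lemma~\ref{lem:norm order} — stated for a single vertex operator $Y_0(v,x)$ in the second slot — to the case where that slot is the sum-of-vertex-operators $B(x)$ delivered by the inductive hypothesis. This is not a new identity but a consequence of linearity combined with the fact that each $x^{-m}$ is a scalar in the formal variable; still, one must confirm the relevant products are summable so that distributing the colons is legitimate, which is exactly what the finiteness observation guarantees. Everything else is bookkeeping and reindexing.
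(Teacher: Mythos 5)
Your proof is correct and follows essentially the same route as the paper: induction on $k$ using the recursive definition \eqref{eq:nested norm} to peel off $Y_0(u_1,x)$, then applying Lemma \ref{lem:norm order} termwise by linearity. The extra care you take with finiteness of the inner sum (via lower truncation of the weight grading) and the well-definedness of the outer normal ordering is sound and only makes explicit what the paper leaves implicit.
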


\begin{proof}
We proceed by induction with Lemma \ref{lem:norm order} as the base case. Assuming equation \eqref{eq:multi norm order} for some $ k \in \mathbb{Z}_{\geq 0} $, we have
\begin{align*}
&\coln Y(u_1,x) Y(u_2,x)\cdots  Y(u_{k+1},x)\coln \\ &\quad = \coln Y(u_1,x) \coln Y(u_2,x) \cdots Y(u_{k+1},x)\coln \coln \\
&\quad =  \coln Y(u_1,x) \sum_{j_2, \dots, j_{k} \geq 0} x^{-j_2 - \cdots - j_{k}} Y_0\left( \left( {\mathcal{L} \choose j_2}u_2 \right) (-1+j_2) \cdots \left( {\mathcal{L} \choose j_{k}}u_{k} \right)(-1+j_{k}) u_{k+1} , x \right) \coln  \\
&\quad = \sum_{j_1, \dots, j_{k} \geq 0} x^{-j_1 - \cdots - j_{k}} Y_0\left( \left( {\mathcal{L} \choose j_1}u_1 \right) (-1+j_1) \cdots \left( {\mathcal{L} \choose j_{k}}u_{k} \right)(-1+j_{k}) u_{k+1} , x \right). \qedhere
\end{align*}
\end{proof}

The previous lemma will be useful when we require that $ V $ is $ C_2 $-cofinite and, hence, can deduce certain behavior when $ k $ is sufficiently large. To get stronger finiteness results, we \textit{assume that $ V $ is $ C_2 $-cofinite} from here on. This implies that there is a finite-dimensional subspace $ U $ of $ V $ such that $ V = U + C_2(V) $. Hence, there exists $ M \in \mathbb{Z} $ such that $ v \in V_{(n)} $ with $ n \geq M $ implies that $ v \in C_2(V) $. We choose such an $ M $ (dependent on $ V $ only).

\begin{lemma} \label{lem:repeats}
Let $ N \in \mathbb{Z} $ and $ k \in \mathbb{Z}_{\geq M} $. Let $ u_j \in V_+^{[\alpha_j]} $ for $ j = 1, \dots, k $. Then
\begin{equation}
u_1(\alpha_1 - N) \cdots u_k(\alpha_k - N) \in E^{( \wt u_1 + \cdots + \wt u_k - 1 )} + E^{( \wt u_1 + \cdots + \wt u_k, \, -N - 1 )} .
\end{equation}
\end{lemma}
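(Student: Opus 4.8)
The plan is to compare two expansions of the nested normal-ordered product $\coln Y_0(u_1,x)\cdots Y_0(u_k,x)\coln$ and to read off a single coefficient of $x$. Throughout write $s=\wt u_1+\cdots+\wt u_k$, and note that since each $u_j\in V_+^{[\alpha_j]}$ we have $\wt u_j\geq 1$, hence $s\geq k\geq M$. The coefficient I want is the one carrying $u_1(\alpha_1-N)\cdots u_k(\alpha_k-N)$; since the mode $u_j(\alpha_j-N)$ contributes $x^{-(\alpha_j-N)-1}$, this is the coefficient of $x^{-\sum_j\alpha_j+kN-k}$, equivalently $\res_x x^{\sum_j\alpha_j-kN+k-1}(\cdots)$. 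Both expansions produce power series in the single coset $-\sum_j\alpha_j+\mathbb{Z}$, so this coefficient is well defined and must agree.

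First I would expand $\coln Y_0(u_1,x)\cdots Y_0(u_k,x)\coln$ directly in modes. Its coefficient of $x^{-\sum_j\alpha_j+kN-k}$ is the summable family $\sum_{m_1+\cdots+m_k=kN}\coln u_1(\alpha_1-m_1)\cdots u_k(\alpha_k-m_k)\coln$. The diagonal term $(m_1,\dots,m_k)=(N,\dots,N)$ is a permutation of $u_1(\alpha_1-N)\cdots u_k(\alpha_k-N)$; by Lemma \ref{lem:swap} each transposition of adjacent modes costs a commutator in $E^{(\wt u_i+\wt u_j-1)}$, which multiplied by the remaining modes lands in $E^{(s-1)}$, so the diagonal term equals $u_1(\alpha_1-N)\cdots u_k(\alpha_k-N)$ modulo $E^{(s-1)}$. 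For every off-diagonal choice, since the $m_j$ are integers summing to $kN$ but are not all equal to $N$, pigeonhole forces some $m_i\geq N+1$; such a normal-ordered product is a product of modes of $\mathcal{S}$-homogeneous vectors of total weight $s$ with one shift $\geq N+1$, hence lies in $E^{(s,-N-1)}$. Thus this coefficient equals $u_1(\alpha_1-N)\cdots u_k(\alpha_k-N)$ modulo $E^{(s-1)}+E^{(s,-N-1)}$.

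Next I would expand the same product by Lemma \ref{lem:multi norm order}, writing $\coln Y_0(u_1,x)\cdots Y_0(u_k,x)\coln=\sum_{\vec j\geq 0}x^{-|\vec j|}Y_0(w_{\vec j},x)$ with $w_{\vec j}=\left(\binom{\mathcal{L}}{j_1}u_1\right)(-1+j_1)\cdots\left(\binom{\mathcal{L}}{j_{k-1}}u_{k-1}\right)(-1+j_{k-1})u_k$, which is $\mathcal{S}$- and $L(0)$-homogeneous of weight $s-|\vec j|$, where $|\vec j|=j_1+\cdots+j_{k-1}$. Reading off the same coefficient gives a summable family $\sum_{\vec j}w_{\vec j}(p_{\vec j})$ of single modes, where $p_{\vec j}$ is the mode index pinned by the coefficient. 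The key observation is a dichotomy on $|\vec j|$: whenever $|\vec j|\geq 1$ the weight $s-|\vec j|\leq s-1$ drops, so the single mode $w_{\vec j}(p_{\vec j})$ already lies in $E^{(\wt w_{\vec j})}\subseteq E^{(s-1)}$; and the one remaining term $\vec j=0$ is $w_{\vec 0}=u_1(-1)\cdots u_{k-1}(-1)u_k$, of weight $s\geq M$, hence in $C_2(V)$, so by Lemma \ref{lem:C2 elements} all of its modes lie in $E^{(s-1)}$. Therefore this entire coefficient lies in $E^{(s-1)}$.

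Equating the two expressions for the coefficient and solving for $u_1(\alpha_1-N)\cdots u_k(\alpha_k-N)$ then places it in $E^{(s-1)}+E^{(s,-N-1)}$, as desired. I expect the difficulty here to be bookkeeping rather than a deep obstacle: one must keep the infinite mode sums in the summable-family sense introduced earlier, verify that the weight of $w_{\vec j}$ genuinely drops for every $\vec j\neq 0$, and confirm that passing to normal order only ever produces $E^{(s-1)}$ corrections via Lemma \ref{lem:swap}. The conceptual crux is recognizing that on the single-vertex-operator side all but the one term $\vec j=0$ drop in weight automatically, so that $C_2$-cofiniteness is applied exactly once, to $w_{\vec 0}$ of weight $s\geq M$ — which is precisely where the hypothesis $k\geq M$ is consumed.
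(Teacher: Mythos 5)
Your argument is correct and is essentially the paper's proof: both extract the coefficient of $x^{-\sum_j \alpha_j + kN - k}$ from the two sides of Lemma \ref{lem:multi norm order}, dispose of the single-vertex-operator side entirely inside $E^{(s-1)}$ (weight drop for $\vec{j} \neq 0$, and $C_2$-cofiniteness via Lemma \ref{lem:C2 elements} applied once to $w_{\vec{0}} = u_1(-1)\cdots u_{k-1}(-1)u_k$ of weight $s \geq k \geq M$), and place the off-diagonal mode tuples in $E^{(s,\,-N-1)}$ by pigeonhole. The one small divergence is that where the paper splits into the cases $N>0$ and $N\leq 0$ (rerunning the argument with $\coln Y_0(u_k,x)\cdots Y_0(u_1,x)\coln$ in the latter, because the all-singular term of the nested normal ordering reverses the order of multiplication), you instead restore the order of the diagonal term by commuting adjacent modes and absorbing the corrections into $E^{(s-1)}$ via Lemma \ref{lem:swap}, which works equally well.
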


\begin{proof}
We first consider the case that $ N \in \mathbb{Z}_{>0} $. From Lemma \ref{lem:multi norm order}, we have
\begin{align*}
&\coln Y_0(u_1,x) \cdots Y_0(u_k,x) \coln \\
& \qquad = \sum_{j_1, \dots, j_{k-1} \geq 0} x^{-j_1 - \cdots - j_{k-1}} Y_0\left( \left( {\mathcal{L} \choose j_1}u_1 \right) (-1+j_1) \cdots \left( {\mathcal{L} \choose j_{k-1}}u_{k-1} \right)(-1+j_{k-1}) u_k , x \right).
\end{align*}
On the right-hand side, when $ j_1 = \cdots = j_{k-1} = 0 $, the element $ u_1(-1) \cdots u_{k-1}(-1) u_k $ has weight $ \wt u_1 + \cdots + \wt u_k \geq k \geq M $, implying it is in $ C_2(V) $. Application of Lemma \ref{lem:C2 elements} then shows that the modes that come from this term are indeed in $ E^{( \wt u_1 + \cdots + \wt u_k - 1 )} $. In the case that any of $ j_1 , \dots,  j_{k-1} $ are greater than zero, we have
\begin{align*}
\wt \left( {\mathcal{L} \choose j_1} u_1 \right)(-1+j_1) \cdots \left( {\mathcal{L} \choose j_k} u_{k-1} \right)(-1+j_{k-1}) u_k &< \wt u_1(-1) \cdots u_{k-1}(-1) u_k  \\
&=  \wt u_1 + \cdots +\wt u_{k} .
\end{align*}
Hence, the modes that come from this term are also in $ E^{( \wt u_1 + \cdots + \wt u_k - 1 )} $.
By the definition of normal ordering, the left-hand side is a sum of terms of the form
\begin{align*}
Y_0^\pm (u_*,x) \cdots Y_0^\pm(u_*,x),
\end{align*}
where $ * $ denotes some number in $ \{1, \dots, k \}$, each being used exactly once. Since $ N > 0 $, $ u_1(\alpha_1 - N) \cdots u_k (\alpha_k - N) $ is a coefficient in the term $ Y_0^+ (u_1,x) \cdots Y_0^+(u_k,x) $, and to extract it, we apply $ \res_{x} x^{\alpha_1 + \cdots + \alpha_k - kN + k - 1 } $ to the left-hand side. Other terms $ u_*(\alpha_* - n_*) \cdots u_* (\alpha_* - n_*) $ with $ n_1 + \dots + n_k = kN $ will be extracted as well. However, if $ (n_1, \dots, n_k) \neq (N, \dots, N) $ but $ n_1 + \dots + n_k = kN $, then there must be some $ n_j \geq  N + 1 $. Hence, these terms are in $ E^{( \wt u_1 + \cdots + \wt u_k, \, -N - 1 )} $. Thus, applying $ \res_{x} x^{\alpha_1 + \cdots + \alpha_k - kN + k - 1 } $ to both sides and rearranging for $ u_1(\alpha_1 - N) \cdots u_k(\alpha_k - N) $ shows the desired result. 

If $ N \leq 0 $, then  $ u_1(\alpha_1 - N) \cdots u_k(\alpha_k - N) $ is a coefficient in $ Y_0^- (u_1,x) \cdots Y_0^-(u_k,x) $, which has the reversed order of multiplication as the term of all singular parts in $ \coln Y_0(u_1,x) \cdots Y_0(u_k,x) \coln $. To obtain the desired result, we repeat similar arguments as above but for $ \coln Y_0(u_k,x) \cdots Y_0(u_1,x) \coln $.
\end{proof}

We are now ready to show that $ C_2 $-cofiniteness of $ V $ is strong enough to impose $ C_n $-cofiniteness on finitely generated $ g $-twisted weak $ V $-modules. The main idea will be to find a spanning set for such a module $ W $ with only finitely many elements not in $ C_n(W) $.

Since $ C_2(V) = \{ u(-2) v : u \in V, \ v \in V \} $ is equivalent to  
\begin{align*}
\Span \left\{ u(-2) v : u \in V_{(m)}^{[\alpha]}, \ v \in V_{(n)}^{[\beta]}, \ m,n \in \mathbb{Z}, \ \alpha, \beta \in P(V) \right\},
\end{align*} 
we see that $ C_2(V) $ has a grading by weight and $ \mathcal{S} $-eigenvalues similar to that of $ V $. Hence, $ V/C_2(V) $ inherits this grading. Let $ \{ [b_i] \}_{i \in I} $ be a basis for $ V/C_2(V) $ consisting of homogeneous vectors. Let $ \{ b_i \}_{i \in I} $ be a set of representatives for this basis (also consisting of homogeneous vectors).  Since $ V $ is assumed to be $ C_2 $-cofinite, we see that $ B $ is a finite set. 

We \emph{assume $ V $ is of CFT type} from here on. Since $ V $ is of CFT type, there is some non-zero multiple of $ \one $ in $ \{ b_i \}_{i \in I} $, and no other weight-zero vectors. For convenience, we will remove this vector from $ \{ b_i \}_{i \in I} $ and call this new set $ B $. Note that every vector $ b_i \in B $ has $ \wt b_i \geq 1 $. We will denote the $ \mathcal{S} $-eigenvalue for $ b_i $ by $ \alpha_i $.

To define the following filtrations, we \textit{assume that $ W $ is generated by an element $ \w $}. To prove the main proposition below, we will use the filtration
\begin{equation}
W^{(0)} \subseteq W^{(1)} \subseteq  \cdots \subseteq W 
\end{equation}
of $ W $ defined by $ W^{(s)} = E^{(s)} \w $, and the filtration
\begin{equation}
 \cdots \subseteq W^{(s,-1)}  \subseteq W^{(s,0)} \subseteq W^{(s,1)} \subseteq \cdots \subseteq W^{(s)} ,
\end{equation}
of $ W^{(s)} $ defined by $ W^{(s,t)} = E^{(s,t)} \w $. That is, any element $ w \in W^{(s)} $ is a sum of elements of the form \begin{equation} \label{eq:W(s) element}
u_1(\alpha_1 - n_1) \cdots u_k(\alpha_k - n_k) \w
\end{equation}
with $ k \in \mathbb{Z}_{>0} $,  $ n_j \in \mathbb{Z} $, and $ u_j \in V^{[\alpha_j]} $ satisfying $ \sum_{j=1}^k \wt {u_j} \leq s $. Furthermore, $ W^{(s,t)} $ is spanned by similar elements, but also with the condition that there is some $ n_i \geq  - t $. Since $ \w $ generates $ W $, we have $ W = \bigcup_{s \in \mathbb{Z}_{\geq 0}} W^{(s)} $.

Since $ B $ is finite and $ Y_W $ satisfies the lower truncation property, there exists some $ L \in \mathbb{Z} $ such that $ b_i(\alpha_i - n) \w  = 0 $ for all $ n \leq L $ and for all $ b_i \in B $. We choose such an $ L $ (dependent on $ B $, $ W $ and $ \w $). Recall that we also have $ M \in \mathbb{Z}_{\geq 0} $ such that $ v \in \coprod_{n \geq M } V_{(n)} $ implies $ v \in C_2(V) $. 

\begin{proposition}\label{prop:C2 spanning set}
Let $ V $ be a $ C_2 $-cofinite vertex operator algebra of CFT type with an automorphism $ g $. Let $ W $ be a weak $ g $-twisted $ V $-module generated by a vector $ \w $. Let $ B $, $ L $ and $ M $ be defined as above. Let $  N \in \mathbb{Z}_{\geq L} $. Then $ W $ is spanned by elements of the form
\begin{equation}\label{eq:spanning set element}
b_{i_1} (\alpha_{i_1} - n_1) \cdots b_{i_k} (\alpha_{i_k} - n_k) \w 
\end{equation}
with $ k \in \mathbb{Z}_{\geq 0 }$, $ b_{i_1} , \dots, b_{i_k} \in B $ and $ n_1 , \dots,  n_k \in \mathbb{Z}_{> L} $ satisfying
\begin{enumerate}
\item[(i)] (ordering condition) \quad $ n_1 \geq \cdots \geq n_k $; and
\item[(ii)](repeat condition) \qquad $ n_j = n_{j+1} = \cdots = n_{j + M - 1} \leq N $ implies $ n_{j + M - 1} \neq n_{j + M} $.
\end{enumerate}
\end{proposition}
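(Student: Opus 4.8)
The plan is to argue by induction on the weight filtration degree $s$, using the filtration $W^{(s)} = E^{(s)}\w$, and to show that every element of $W^{(s)}$ is a linear combination of spanning elements of the form \eqref{eq:spanning set element} satisfying (i) and (ii) (call these \emph{admissible}), together with an element of $W^{(s-1)}$. Since $W = \bigcup_s W^{(s)}$ and the base case $s=0$ gives $W^{(0)} = \mathbb{C}\w$, this suffices. Fix $s$ and assume the claim for all smaller degrees. A general spanning vector of $W^{(s)}$ has the form $u_1(\alpha_1 - n_1)\cdots u_k(\alpha_k - n_k)\w$ with $\sum_j \wt{u_j} \leq s$. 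Expanding each factor via $V = \Span(B) + \mathbb{C}\one + C_2(V)$, the terms containing a factor from $C_2(V)$ lie in $W^{(s-1)}$ by Lemma \ref{lem:C2 elements}, while factors equal to $\one$ contribute only the identity or zero. Hence, modulo $W^{(s-1)}$, it is enough to treat monomials all of whose factors lie in $B$; since each $b_i$ has weight at least $1$ we have $k \leq s$, and we may further assume the total label-weight $w = \sum_j \wt{b_{i_j}}$ equals $s$, as smaller $w$ already places the monomial in $W^{(s-1)}$.

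First I would impose the ordering condition (i): repeatedly swapping adjacent modes by Lemma \ref{lem:swap}, each transposition produces a commutator in a strictly lower weight filtration, hence in $W^{(s-1)}$, so modulo $W^{(s-1)}$ the monomial may be sorted to have $n_1 \geq \cdots \geq n_k$. Any sorted monomial whose smallest exponent satisfies $n_k \leq L$ annihilates $\w$ by the choice of $L$, so we may assume all $n_j > L$. To impose the repeat condition (ii), I would run a secondary induction inside the \emph{finite} set $\mathcal{S}(s,d,k)$ of sorted tuples $(n_1 \geq \cdots \geq n_k)$ with entries $> L$, fixed length $k$, fixed label-weight $w = s$, and fixed exponent sum $d = \sum_j n_j$; this set is finite because entries bounded below by $L+1$ and summing to $d$ are also bounded above. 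Suppose a sorted monomial violates (ii), so it contains $M+1$ consecutive equal exponents $V^{*} \leq N$. Applying Lemma \ref{lem:repeats} to these $M+1$ modes (here $M+1 \geq M$) rewrites the sub-product as an element of the lower filtration $E^{(s-1)}$, contributing to $W^{(s-1)}$, plus a sum of products of the \emph{same} factors $b_{i_j},\dots,b_{i_{j+M}}$ with new exponents summing to $(M+1)V^{*}$ but not all equal to $V^{*}$. Re-sorting these (again modulo $W^{(s-1)}$ via Lemma \ref{lem:swap}) and discarding any with an exponent $\leq L$, the surviving monomials remain in the same class $\mathcal{S}(s,d,k)$.

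The crux, and the step I expect to be the main obstacle, is to verify that each surviving monomial is \emph{strictly larger} than the original in the descending lexicographic order on $\mathcal{S}(s,d,k)$. This rests on the elementary fact that, among integer tuples of fixed length and fixed sum, the constant tuple $(V^{*},\dots,V^{*})$ is the unique lexicographically smallest one, so every replacement tuple produced by Lemma \ref{lem:repeats} is lexicographically larger, and substituting a lexicographically larger sub-multiset of exponents into the full sorted tuple strictly increases it as well. Granting this, I would process $\mathcal{S}(s,d,k)$ in decreasing lexicographic order: each admissible monomial is kept, while each monomial violating (ii) is, by the above, a combination of elements of $W^{(s-1)}$ (admissible by the outer induction) together with strictly lexicographically larger monomials of the same class (admissible by the inner induction hypothesis). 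Finiteness of $\mathcal{S}(s,d,k)$ guarantees that this secondary induction terminates, completing the induction on $s$. The delicate points to get right are precisely this monotonicity of the lexicographic order under the substitution from Lemma \ref{lem:repeats}, the clean separation of error terms into the lower filtration $W^{(s-1)}$ versus strictly larger same-class terms, and the re-sorting required after each application of Lemma \ref{lem:repeats}.
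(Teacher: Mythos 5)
Your argument is correct, but it organizes the induction differently from the paper, so it is worth comparing the two. The paper runs a double induction on the pair $(s,N)$: the statement $P(s,N)$ is that $W^{(s)}$ lies in the span of admissible monomials for threshold $N$, the case $N=L$ is where the repeat condition is vacuous, and $P(s,N)$ is deduced from $P(s,N-1)$ together with $P(s-1,N)$. In that scheme the only possible violation of (ii) is a block of exponents equal to exactly $N$, and the correction terms produced by Lemma \ref{lem:repeats} are not tracked combinatorially: they are simply re-expanded in $B\cup C_2(V)$, re-sorted, and funneled into the case where the leading exponent exceeds $N$, which is disposed of by stripping the leading mode and invoking $P(s-1,N)$ on the tail. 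You instead fix $N$ once and for all, run a single outer induction on $s$, and replace the inner induction on $N$ by a descending induction on the lexicographic order of sorted exponent tuples within the finite classes $\mathcal{S}(s,d,k)$. Your crux step is sound: the constant tuple is the unique lexicographic minimum among same-length, same-sum tuples, and since the replacement multiset has the same sum but a strictly larger maximum than $V^{*}$, the count-of-entries-$\geq v$ argument shows the full re-sorted tuple strictly increases; finiteness of $\mathcal{S}(s,d,k)$ then gives termination. One point you should make explicit if you write this up: your inner induction requires the correction terms to stay in the \emph{same} class $\mathcal{S}(s,d,k)$ (same factors, same exponent sum $d$). This is true, but it follows from the \emph{proof} of Lemma \ref{lem:repeats} (the extra terms are other coefficients of the same normal-ordered product, hence products of the same modes $b_{i_j}$ permuted, with exponents summing to $(M+1)V^{*}$), not from its \emph{statement}, which only places them in $E^{(\wt u_1+\cdots+\wt u_k,\,-V^{*}-1)}$ --- a space spanned by monomials with arbitrary factors and arbitrary exponent sums. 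So you are implicitly using a strengthened form of that lemma; the paper's route deliberately avoids needing it. The trade-off is that your version is self-contained at a fixed $N$ and makes the combinatorial descent explicit, while the paper's version avoids the lexicographic bookkeeping at the cost of a second induction parameter.
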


\begin{proof}
For each $ N \in \mathbb{Z}_{\geq L} $, define $ W^{[N]} $ to be the subspace of $ W $ spanned by elements of the form \eqref{eq:spanning set element} satisfying the ordering and repeat conditions. For each $ (s,N) \in \mathbb{Z}_{\geq 0} \times \mathbb{Z}_{\geq L} $, consider the statement $ P(s,N) $: $ W^{(s)} \subseteq W^{[N]}$.  We prove this is true for all $ (s,N) \in \mathbb{Z}_{\geq 0} \times \mathbb{Z}_{\geq L} $ by induction.

Since $ V $ is of CFT type, we see that $ W^{(0)} = \mathbb{C} \w $, hence the base cases $ P(0, N) $ for all $ N \geq L $ are immediately true. 

Suppose that the case $ P(s-1,L) $ is true for some $ s > 0 $. Consider  an element in $ W^{(s)} $ of the form \eqref{eq:W(s) element}.  Each $ u_j $ can be expressed in terms of elements in $ B $ or $ C_2(V) $ (or possibly $ \one $, but this case can be immediately ignored by the identity property). The components in $ C_2(V) $ give elements in $ W^{(s-1)} $ by Lemma \ref{lem:C2 elements}. The remaining elements are of the form \eqref{eq:spanning set element}, possibly with the wrong ordering of the modes. Lemma \ref{lem:swap} allows this reordering up to an element in $ W^{(s-1)} $. After ordering, any element with some $ b_i(\alpha_i - n) $ with $ n \leq L $ will be zero. Since $ N = L $, there is no repeat condition to check. Hence, $ w $ is a sum of elements of the form \eqref{eq:spanning set element} satisfying the ordering and repeat conditions, and elements in $ W^{(s-1)} $. The latter terms are handled by the induction hypothesis $ P(s-1,L) $. Hence, $ P(s,L) $ is true. 

Suppose that, for some $ (s,N) \in \mathbb{Z}_{>0} \times \mathbb{Z}_{>L} $, $ P(s,N-1) $ and $ P(s-1,N) $ are true. By $ P(s,N-1) $, any element in $ W^{(s)} $ can be reduced to a sum of elements of the form
\begin{align*}
b_{i_1} ( \alpha_{i_1} - n_1) \cdots  b_{i_{k}}( \alpha_{i_k} - n_k) \w,
\end{align*}
such that $ n_1 \geq \cdots \geq n_k \in \mathbb{Z}_{>L} $ and $ n_j = n_{j+1} = \cdots = n_{j + M - 1}  \leq N - 1 $ implies that $ n_{j + M - 1} \neq n_{j + M} $. Consider an element of this form. If no more than $ M $ of the $ n_j $ are equal to $ N $, then we already have the desired form of an element in $ W^{[N]} $. Otherwise we have \begin{align*}
n_1 \geq \cdots \geq n_{p-1} > N = n_p = \cdots = n_{p + m-1} > n_{p + m} \geq \cdots \geq n_k > L
\end{align*} 
for some $ p \geq 1 $ and $ m > M $. If $ p > 1 $, then $ b_{i_2}(\alpha_{i_2} - n_2) \cdots  b_{i_k}(\alpha_{i_k} - n_k) w \in W^{(s-1)} $, which is in $ W^{[N]} $ by $ P(s-1, N) $. Replacing $ b_{i_1}(\alpha_{i_1} - n_1) $ we get elements of the form \eqref{eq:spanning set element}, but possibly with $ b_{i_1}(\alpha_{i_1} - n_1) $ out of place. We can move $ b_{i_1}(\alpha_{i_1} - n_1) $ into its correct place (up to elements in $ W^{(s-1)} \subseteq W^{[N]} $) with Lemma \ref{lem:swap}. Since $ n_1 > N $, moving $ b_{i_1}(\alpha_{i_1} - n_1) $ does not break the repeat condition, hence we obtain an element in $ W^{[N]} $. If $ p = 1 $, then we use Lemma \ref{lem:repeats} to replace $ b_{i_1} ( \alpha_{i_1} - N) \cdots  b_{i_{m}}( \alpha_{i_{m}} - N) $, obtaining elements in $ W^{(s-1)} $ or $ W^{(s,-N-1)} $. The elements in $ W^{(s,-N-1)} $ can be expressed in terms of $ B $ or elements in $ C_2(V) $ and reordered to obtain the case $ p > 1 $ that we have already resolved. Hence, $ P(s,N) $ is true.

Hence, for each fixed $ N \in \mathbb{Z}_{\geq L} $, we have $ W^{(s)} \subseteq W^{[N]} $ for all $ s \in \mathbb{Z}_{\geq 0} $. Thus, $ W \subseteq W^{[N]} $.
\end{proof}

\begin{remark}
We could further refine the spanning set in Proposition \ref{prop:C2 spanning set} to include the condition $ n_j \neq n_{j+1} $ whenever $ n_{j+1} \geq 2 $. To obtain this, we would prove \begin{align*}
u(\alpha -N)v(\beta - N) &= \sum_{k \geq 0} {\alpha \choose k} (u(-1 +k) v)(\alpha + \beta - 2N + 1 - k)  \\
&- \sum_{\substack{k \geq 0 \\ k \neq N-1}} u(\alpha -1 -k)v( \beta - 2N + 1 + k ) - \sum_{k \geq 0} v(\beta - 2N -k) u(\alpha + k)  
\end{align*}  
when $ N > 0 $, and is in  $ E^{(\wt u + \wt v - 1)} + E^{(\wt u + \wt v,\, -N - 1)} 
$ when $ N > 2 $.
This refined spanning set will not be needed to prove $ C_n $-cofiniteness.
\end{remark}

\begin{theorem}\label{thm:Cn cofiniteness}
Let $ V $ be a $ C_2 $-cofinite CFT-type vertex operator algebra with an automorphism $ g $. Let $ W $ be a finitely-generated weak $ g $-twisted $ V $-module. Then $ W $ is $ C_n $-cofinite for all $ n \in \mathbb{Z}_{>0} $.
\end{theorem}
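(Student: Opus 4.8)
The plan is to deduce the theorem from the spanning set of Proposition \ref{prop:C2 spanning set} by showing that, for each fixed $n$, all but finitely many of the spanning vectors already lie in $C_n(W)$. First I would reduce to the case of a single generator. If $W$ is generated by $w_1, \dots, w_r$, then each submodule $E w_i$ is a weak $g$-twisted $V$-module generated by the single vector $w_i$, and $W = E w_1 + \cdots + E w_r$ since $\id_W \in E$. Because $C_n(E w_i) \subseteq C_n(W)$, writing each $E w_i = C_n(E w_i) + F_i$ with $F_i$ finite-dimensional yields $W = C_n(W) + (F_1 + \cdots + F_r)$, so $\dim W/C_n(W) < \infty$ once each $E w_i$ is known to be $C_n$-cofinite. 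Thus it suffices to treat a module $W = E\w$ generated by one vector.

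Fix $n \in \mathbb{Z}_{>0}$ and apply Proposition \ref{prop:C2 spanning set} with any $N \geq \max(L, n-1)$, producing a spanning set of vectors $b_{i_1}(\alpha_{i_1} - n_1)\cdots b_{i_k}(\alpha_{i_k} - n_k)\w$ satisfying the ordering and repeat conditions. I would split these according to the leftmost exponent $n_1$. When $n_1 \geq n$, I claim the vector lies in $C_n(W)$. Writing $w' = b_{i_2}(\alpha_{i_2} - n_2)\cdots \w \in W$, the leftmost mode is $b_{i_1}(\alpha_{i_1} - n_1, 0)$; since $\Re(\alpha_{i_1}) \in [0,1)$ while $n \geq 1$, we have $\alpha_{i_1} - n \notin \mathbb{Z}_{\geq 0}$, so Lemma \ref{lem:lower modes} rewrites $b_{i_1}(\alpha_{i_1} - n_1, 0) = \sum_{i} v_i(\alpha_{i_1} - n, 0)$. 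Each $v_i$ lies in $V^{[\alpha_{i_1}]}$ because the operators $L(-1)$ and $\mathcal{N}$ appearing in that lemma's proof preserve the $\mathcal{S}$-eigenvalue, and $\wt v_i \geq \wt b_{i_1} \geq 1$; hence $v_i(\alpha_{i_1} - n) w' \in C_n(W)$. (When $n_1 = n$ the vector is already of this form, and when $n = 1$ the constraint $u \in V_+$ holds since $\wt b_{i_1}, \wt v_i \geq 1$.)

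It then remains to bound the family with $n_1 \leq n - 1$. Here every exponent satisfies $L < n_j \leq n - 1 \leq N$, so each value lies in the finite set $\{L+1, \dots, n-1\}$; by the repeat condition (ii) no such value occurs more than $M$ times, so the length is bounded by $k \leq M(n-1-L)$ and the entries $b_{i_j} \in B$ range over the finite set $B$. This family is therefore finite, and together with the $k=0$ vector $\w$ it spans $W$ modulo $C_n(W)$, giving $\dim W/C_n(W) < \infty$ and completing the single-generator case.

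The delicate point is this last family: the entire argument collapses without the repeat condition (ii), since otherwise a single exponent in $\{L+1,\dots,n-1\}$ could be repeated arbitrarily often and yield infinitely many spanning vectors outside $C_n(W)$. The second point requiring care is the application of Lemma \ref{lem:lower modes}, which rests on the normalization $\Re(\alpha) \in [0,1)$ forcing $\alpha - n \notin \mathbb{Z}_{\geq 0}$ for every $n \geq 1$; this is exactly what allows the large-exponent family to be absorbed into $C_n(W)$ uniformly in $n$, so that the conclusion holds for all $n \in \mathbb{Z}_{>0}$ rather than merely for a single value.
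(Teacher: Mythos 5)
Your proof is correct and follows essentially the same route as the paper: reduce to a single generator, invoke Proposition \ref{prop:C2 spanning set} with $N$ large, absorb all spanning vectors with $n_1 \geq n$ into $C_n(W)$ via Lemma \ref{lem:lower modes}, and use the ordering and repeat conditions to see that only finitely many spanning vectors remain. You are somewhat more explicit than the paper about the choice $N \geq \max(L,n-1)$, the count $k \leq M(n-1-L)$ coming from the repeat condition, and the fact that the $v_i$ produced by Lemma \ref{lem:lower modes} stay in $V^{[\alpha_{i_1}]} \cap V_+$, all of which are accurate refinements rather than deviations.
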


\begin{proof}
Let $ W $ be generated by $ \{\w_1, \dots, \w_k \} $. Proposition \ref{prop:C2 spanning set} gives a spanning set for each submodule of $ W $ generated by $ \w_j $. The union of these spanning sets gives a spanning set for $ W $. Since $ B $ is finite, we can pick $ N $ in Proposition \ref{prop:C2 spanning set} to be sufficiently large so that there are only finitely many elements in the spanning set of the form
\begin{align*}
b_{i_1} ( \alpha_{i_1} - n_1) \cdots  b_{i_{k}}( \alpha_{i_k} - n_k) \w_j
\end{align*} 
with $ n_1 < n $. The rest must have $ n_1 \geq n > 0 $. And when $ n_1 = n + j $ for some $ j \geq 1 $, we can express $ b_{i_1} ( \alpha_{i_1} - n_1) $ as a sum of modes of the form $ v_j ( \alpha_{i_1} - n) $ for some $ v_j \in V_+ $, by Lemma \ref{lem:lower modes}. Hence, all but a finite set of elements in the spanning set are in $ C_n(W) $. Hence, $ W/ C_n(W) $ is finite-dimensional. 
\end{proof}

\printbibliography[category=cited]

\noindent {\small \sc Department of Mathematics, Rutgers University,
110 Frelinghuysen Rd., Piscataway, NJ 08854-8019}

\noindent {\em E-mail address}: dwt24@math.rutgers.edu

\end{document}